\def\setminus{\mathchoice
    {\mathbin{\vrule height .62ex width 1.61ex depth -.38ex}}
    {\mathbin{\vrule height .62ex width 1.61ex depth -.38ex}}
    {\mathbin{\vrule height .50ex width 0.85ex depth -.28ex}}
    {\mathbin{\vrule height .20ex width 0.570ex depth -.24ex}}
}
\DeclareMathSymbol{A}{\mathalpha}{operators}{`A}%
\DeclareMathSymbol{B}{\mathalpha}{operators}{`B}%
\DeclareMathSymbol{C}{\mathalpha}{operators}{`C}%
\DeclareMathSymbol{D}{\mathalpha}{operators}{`D}%
\DeclareMathSymbol{E}{\mathalpha}{operators}{`E}%
\DeclareMathSymbol{F}{\mathalpha}{operators}{`F}%
\DeclareMathSymbol{G}{\mathalpha}{operators}{`G}%
\DeclareMathSymbol{H}{\mathalpha}{operators}{`H}%
\DeclareMathSymbol{I}{\mathalpha}{operators}{`I}%
\DeclareMathSymbol{J}{\mathalpha}{operators}{`J}%
\DeclareMathSymbol{K}{\mathalpha}{operators}{`K}%
\DeclareMathSymbol{L}{\mathalpha}{operators}{`L}%
\DeclareMathSymbol{M}{\mathalpha}{operators}{`M}%
\DeclareMathSymbol{N}{\mathalpha}{operators}{`N}%
\DeclareMathSymbol{O}{\mathalpha}{operators}{`O}%
\DeclareMathSymbol{P}{\mathalpha}{operators}{`P}%
\DeclareMathSymbol{Q}{\mathalpha}{operators}{`Q}%
\DeclareMathSymbol{R}{\mathalpha}{operators}{`R}%
\DeclareMathSymbol{S}{\mathalpha}{operators}{`S}%
\DeclareMathSymbol{T}{\mathalpha}{operators}{`T}%
\DeclareMathSymbol{U}{\mathalpha}{operators}{`U}%
\DeclareMathSymbol{V}{\mathalpha}{operators}{`V}%
\DeclareMathSymbol{W}{\mathalpha}{operators}{`W}%
\DeclareMathSymbol{X}{\mathalpha}{operators}{`X}%
\DeclareMathSymbol{Y}{\mathalpha}{operators}{`Y}%
\DeclareMathSymbol{Z}{\mathalpha}{operators}{`Z}%
\renewcommand{\leq}{\leqslant}
\renewcommand{\geq}{\geqslant}
\newcommand{\Cc}{\mathbf{C}}
\newcommand{\Oo}{\mathbf{O}}
\newcommand{\Oc}{\mathcal{O}}
\newcommand{\Zz}{\mathbf{Z}}
\newcommand{\Rr}{\mathbf{R}}
\newcommand{\Ss}{\mathbf{S}}
\newcommand{\Hh}{\mathbf{H}}
\newcommand{\Qq}{\mathbf{Q}}
\newcommand{\Ff}{\mathbf{F}}
\newcommand{\mmu}{\boldsymbol{\mu}}
\newcommand{\expect}{\mathbf{E}}
\newcommand{\mods}[1]{\,(\mathrm{mod}\,{#1})}
\DeclareMathOperator{\Kl}{Kl}
\DeclareMathOperator{\Imag}{Im}
\DeclareMathOperator{\ind}{\kappa}
\DeclareMathOperator{\Tr}{tr}
\DeclareMathOperator{\disc}{disc}
\renewcommand{\rho}{\varrho}
\DeclareMathOperator{\SL}{SL}
\DeclareMathOperator{\Sp}{Sp}
\DeclareMathOperator{\SU}{SU}
\DeclareMathOperator{\Un}{U}
\DeclareMathOperator{\USp}{USp}
\DeclareMathSymbol{\gena}{\mathord}{letters}{"3C}
\DeclareMathSymbol{\genb}{\mathord}{letters}{"3E}
\theoremstyle{plain}
\newtheorem{theorem}{Theorem}[section]
\newtheorem{lemma}[theorem]{Lemma}
\newtheorem{corollary}[theorem]{Corollary}
\newtheorem{proposition}[theorem]{Proposition}
\theoremstyle{remark}
\theoremstyle{definition}
\newtheorem*{question}{Question}
\newtheorem{example}[theorem]{Example}
\newtheorem{remark}[theorem]{Remark}
\newcommand{\mcF}{\mathcal{F}}
\newcommand{\mcG}{\mathcal{G}}
\renewcommand{\geq}{\geqslant}
\renewcommand{\leq}{\leqslant}
\begin{document}

\title{Ultra-short sums of trace functions}

\author{E. Kowalski}
\author{T. Untrau}
\address{ETH Z\"urich -- D-MATH\\
  R\"amistrasse 101\\
  8092 Z\"urich\\
  Switzerland} 
\email{kowalski@math.ethz.ch}
\address{Université de Bordeaux, CNRS, Bordeaux INP, IMB, UMR 5251, F-33400 \\ Talence, France} 
\email{theo.untrau@math.u-bordeaux.fr}

\date{\today,\ \thistime} 

\subjclass[2010]{11T23, 11L15}

\keywords{Equidistribution, linear relations between algebraic
  numbers, Weyl sums, roots of polynomial congruences, trace functions}

\begin{abstract}
  We generalize results of Duke, Garcia, Hyde, Lutz and others on the
  distribution of sums of roots of unity related to Gaussian periods
  to obtain equidistribution of similar sums over zeros of arbitrary
  integral polynomials. We also interpret these results in terms of
  trace functions, and generalize them to higher rank trace functions.
\end{abstract}

\maketitle


\section{Introduction}

The motivation for this work lies in papers of Garcia, Hyde and
Lutz~\cite{ghl} and Duke, Garcia and Lutz~\cite{dgl}, recently
generalized by Untrau~\cite{untrau} in a number of ways, which
considered the distribution properties of certain finite sums of roots
of unity which are related to Gaussian periods and to
``supercharacters'' of finite groups.

We interpret these sums as examples of sums of trace functions over
certain \emph{bounded} finite sets.  From this point of view, this
study is a complement to results concerning sums of trace functions
with growing length modulo a prime~$p$ (for instance, the paper of
Perret-Gentil~\cite{p-g} for sums of length roughly up to $\log p$, or
that of Fouvry, Kowalski, Michel, Raju, Rivat and
Soundararajan~\cite{sliding} for sums of length slightly above
$\sqrt{p}$, and that of Kowalski and Sawin~\cite{ks} for sums of
length proportional to~$p$).


The range of summation will be taken to be more general than an
interval, and despite the simplicity of the setting, one obtains some
interesting equidistribution results.

Here is a simple illustration of our statements. More general versions
will be proved in Sections~\ref{sec-proofs1},~\ref{section
  multiplicative} and~\ref{sec-proofs2}. We recall the definition
$$
\Kl_2(a;q)=\frac{1}{\sqrt{q}}\sum_{x\in\Ff_q^{\times}}
e\Bigl(\frac{ax+\bar{x}}{q}\Bigr),\quad\quad
e(z)=e^{2i\pi z},
$$
of the normalized Kloosterman sums modulo a prime number~$q$.

\begin{theorem}[Ultra-short sums of additive characters and
  Kloosterman sums]\label{th-1}
  Let $g\in~\Zz[X]$ be a fixed monic polynomial of degree~$d\geq
  1$. For any field~$K$, denote by $Z_g(K)$ the set of zeros of~$g$
  in~$K$, and put $Z_g=Z_g(\Cc)$. Let $K_g= \Qq(Z_g)$ be the splitting
  field of $g$.
  \par
  \emph{(1)} As $q\to +\infty$ among prime numbers unramified and
  totally split in $K_g$, the sums
  $$
  \sum_{x\in Z_g(\Ff_q)}e\Bigl(\frac{ax}{q}\Bigr)
  $$
  parameterized by $a\in\Ff_q$ become equidistributed in~$\Cc$ with
  respect to some explicit probability measure~$\mu_g$.
  \par
  \emph{(2)} Suppose that $0\notin Z_g$. As $q\to +\infty$ among prime
  numbers unramified and totally split in $K_g$, the sums
  $$
  \sum_{x\in Z_g(\Ff_q)}\Kl_2(ax;q)
  $$
  parameterized by $a\in\Ff_q$ become equidistributed in~$\Cc$ with
  respect to the measure which is the law of the sum of $d$ independent
  Sato--Tate random variables.
\end{theorem}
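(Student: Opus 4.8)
The plan is to establish both parts by the method of moments. This is legitimate because all the empirical measures $\frac1q\sum_{a\in\Ff_q}\delta_{S_q(a)}$ (resp.\ $\delta_{T_q(a)}$) are supported in the fixed disc of radius $d$ (resp.\ $2d$, by the Weil bound $|\Kl_2(b;q)|\le 2$), on which polynomials in $z,\bar z$ are dense in the continuous functions; so it suffices to prove that for each fixed pair $m,n\ge 0$ the averages $\frac1q\sum_{a\in\Ff_q}S_q(a)^m\overline{S_q(a)}^{\,n}$, and likewise with $T_q$, converge to the corresponding mixed moment of the target law as $q\to+\infty$ through primes unramified and totally split in $K_g$. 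The key preliminary remark is that the roots of $g$ lie in $\Oc_{K_g}$ ($g$ being monic over $\Zz$), and that if $\ideal q\mid q$ is a prime of $\Oc_{K_g}$ then $\Oc_{K_g}/\ideal q\cong\Ff_q$ (as $q$ is totally split) and $g$ splits into linear factors modulo $q$; hence reduction modulo $\ideal q$ maps $Z_g$ onto $Z_g(\Ff_q)$, and this is a bijection $z\mapsto\bar z$ once $q$ exceeds the finitely many, $q$-independent, norms of the nonzero differences of roots (and, in part (2), of the nonzero roots). Thus $S_q(a)=\sum_{z\in Z_g}e(a\bar z/q)$ and $T_q(a)=\sum_{z\in Z_g}\Kl_2(a\bar z;q)$.

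For part (1), expanding and summing the resulting complete exponential sums over $a$ gives
$$
\frac1q\sum_{a\in\Ff_q}S_q(a)^m\overline{S_q(a)}^{\,n}
=\#\bigl\{(\uple x,\uple y)\in Z_g^{m}\times Z_g^{n}:\;\textstyle\sum_i\bar x_i=\sum_j\bar y_j\text{ in }\Ff_q\bigr\}.
$$
For each fixed pair, $\sum_i x_i-\sum_j y_j\in\Oc_{K_g}$ either vanishes or is a nonzero algebraic integer of bounded norm, so for $q$ large the congruence modulo $\ideal q$ is equivalent to the identity $\sum_i x_i=\sum_j y_j$ in $\Cc$; hence the average converges to the number $N_{m,n}$ of pairs with $\sum_i x_i=\sum_j y_j$. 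Let $R=\{\uple c\in\Zz^{Z_g}:\sum_z c_z z=0\}$ be the lattice of integral relations among the roots, $H_g=R^{\perp}\subseteq(\Rr/\Zz)^{Z_g}$ the corresponding subtorus, and $\lambda_g$ its Haar probability measure. A direct computation shows that the mixed moments of the pushforward $\mu_g$ of $\lambda_g$ under $(t_z)_z\mapsto\sum_z e(t_z)$ are precisely the $N_{m,n}$; since moreover $S_q(a)$ is the image under this map of the point $(\{a\bar z/q\})_z\in H_g$, the method of moments identifies the limit as this explicit measure $\mu_g$.

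For part (2), $\Kl_2(b;q)\in\Rr$ for $b\in\Ff_q^\times$, so $T_q(a)\in\Rr$ and it suffices to treat $\frac1q\sum_a T_q(a)^{m+n}$; since $\Kl_2(0;q)=-q^{-1/2}$, discarding $a=0$ costs $O_d(q^{-1})$, and
$$
\frac1q\sum_{a\in\Ff_q^\times}T_q(a)^{m+n}
=\sum_{\uple x\in Z_g^{m+n}}\;\frac1q\sum_{a\in\Ff_q^\times}\;\prod_{k=1}^{m+n}\Kl_2(a\bar x_k;q).
$$
Fix $\uple x$, let $w_1,\dots,w_e\in Z_g$ be the distinct values among its entries with multiplicities $\mu_1,\dots,\mu_e$, and let $m_w\colon\Gm\to\Gm$ be multiplication by $w$. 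Using that $\Kl_2(\cdot\,;q)$ is the trace function of the Kloosterman sheaf $\hk2$ on $\Gm$ --- lisse, geometrically irreducible of rank $2$, pure of weight $0$, with trivial determinant and with arithmetic and geometric monodromy group $\SL_2$ --- the inner sum equals $\frac1q\sum_{a\in\Ff_q^\times}t_{\mcG}(a)$ for the lisse sheaf $\mcG=\bigotimes_{j=1}^{e}(m_{\bar w_j}^{*}\hk2)^{\otimes\mu_j}$ on $\Gm$. I would then invoke Katz's results that distinct multiplicative translates of $\hk2$ remain geometrically non-isomorphic even after twisting by a rank-one sheaf; by Goursat--Kolchin--Ribet the geometric monodromy group of $\bigoplus_j m_{\bar w_j}^{*}\hk2$ is the full product $\SL_2^{e}$, whence $\dim\mcG^{\pi_1^{\mathrm{geom}}}=\prod_j\dim(\mathrm{Std}^{\otimes\mu_j})^{\SL_2}$. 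By Grothendieck--Lefschetz, $\frac1q\sum_a t_{\mcG}(a)$ equals this dimension up to $O(q^{-1/2})$: $H^0_c=0$; the $H^2_c$-term contributes exactly $\dim\mcG^{\pi_1^{\mathrm{geom}}}$, Frobenius acting trivially on it because the arithmetic monodromy is already $\SL_2^{e}$; and $H^1_c$ is mixed of weight $\le 1$ with dimension bounded in terms of $d$ and $m+n$ by Grothendieck--Ogg--Shafarevich. Summing over the $d^{m+n}$ choices of $\uple x$ gives the limit $\sum_{\uple x\in Z_g^{m+n}}\prod_{z\in Z_g}\dim(\mathrm{Std}^{\otimes\nu_z})^{\SL_2}$, $\nu_z=\#\{k:x_k=z\}$; since the $\mu$-th moment of the Sato--Tate law equals $\dim(\mathrm{Std}^{\otimes\mu})^{\SL_2}$, this is exactly the $(m+n)$-th moment of $\sum_{z\in Z_g}\mathrm{ST}_z$ for independent Sato--Tate variables $\mathrm{ST}_z$, proving part (2).

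The reduction to moments and all of part (1) are routine, the one substantive point there being the local--global remark that a fixed integral combination of roots either vanishes identically or stays nonzero modulo $\ideal q$ for $q$ large. The hard part lies in part (2): establishing the geometric independence of distinct multiplicative translates of $\hk2$, i.e.\ that the monodromy of their direct sum is as large as group theory allows. This rests on Katz's study of multiplicative translates of Kloosterman sheaves (no geometric isomorphisms between distinct translates, even up to a rank-one twist) together with Goursat--Kolchin--Ribet, and, for the main term, on the fact that $\hk2$ has \emph{arithmetic} --- not merely geometric --- monodromy $\SL_2$ (equivalently, that $\det\hk2$ is arithmetically trivial), which rules out spurious Frobenius eigenvalues on the top-degree cohomology.
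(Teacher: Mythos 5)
Your argument is correct, and the arithmetic--geometric inputs are the ones the paper relies on; what differs is the probabilistic packaging. The paper does not work with the complex-valued sums directly: it proves convergence in law of the random \emph{function} $a\mapsto (e(a\varpi(z)/q))_{z\in Z_g}$ in the compact group $C(Z_g;\Ss^1)$ (via the Weyl criterion, whose characters are indexed by $\alpha\in C(Z_g;\Zz)$ and whose Weyl sums are in fact eventually \emph{stationary}, equal to $1$ or $0$ according to whether $\gamma(\alpha)=\sum_z\alpha(z)z$ vanishes), identifying the limit as Haar measure on the subtorus $H_g=R_g^{\perp}$; the statement about the sums then follows by pushing forward under the continuous linear form $\sigma$. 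Your method of moments is the composition of that argument with $\sigma$: expanding $S_q(a)^m\overline{S_q(a)}^n$ produces exactly those character sums, and your local--global step (a fixed nonzero algebraic integer $\sum_i x_i-\sum_j y_j$ survives reduction modulo $\ideal{q}$ for $|\ideal{q}|$ large) is the same as the paper's. The functional formulation buys modularity -- the conditioning results and the generalizations to $e(av(x)/q)$, multiplicative characters, etc., follow by changing the test characters, and the limit measure is identified with no extra work -- whereas your route needs the additional Pontryagin-duality computation matching the counts $N_{m,n}$ with the moments of the pushforward of Haar measure on $H_g$ (which you do correctly, since $(R^{\perp})^{\perp}=R$ for the discrete subgroup $R\subset\Zz^{Z_g}$). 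For part (2) the paper likewise proves convergence in law of $a\mapsto(\Theta_p(a\varpi(x)))_x$ in $C(Z_g;\USp_2(\Cc)^{\sharp})$, testing against products of irreducible characters $\prod_x\chi_x(\Theta_p(ax))$ rather than powers of the trace; this is a change of basis from your moment computation, and both rest on the same input, namely that $\hk{2}$ is bountiful of $\Sp_2$-type in the sense of Fouvry--Kowalski--Michel (Katz's non-isomorphy of distinct multiplicative translates plus Goursat--Kolchin--Ribet giving geometric monodromy $\SL_2^{e}$ for the sum of translates, together with arithmetic monodromy equal to geometric so that Frobenius acts trivially on the $H^2_c$ main term). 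Your treatment of $a=0$ and of the identification with Sato--Tate moments is fine.
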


\begin{example}
  (1) The case considered in the previous papers that we mentioned is
  that of $g=X^d-1$ for some integer $d\geq 1$, in which case $Z_g$ is
  the set of $d$-th roots of unity and the primes involved are the prime
  numbers congruent to~$1$ modulo~$d$. (In fact, these references
  consider more generally the sums above for $q$ a power of an odd prime
  $\equiv 1\mods{d}$, and we will also handle this case.)
  \par
  (2) The measure $\mu_{g}$ can be described relatively explicitly,
  and depends on the additive relations (with integral coefficients)
  satisfied by the zeros of $g$. We will discuss this in more detail
  below, but ``generically'', we will see that $\mu_g$ is just the law
  of the sum $X_1+\cdots+X_d$ of $d$ independent random variables each
  uniformly distributed on the unit circle.  However, more interesting
  measures also arise, for instance for $g=X^{\ell}-1$ where $\ell$ is
  a prime number, the measure $\mu_{X^{\ell}-1}$ is the image by the
  map
  $$
  (z_1,\ldots,z_{\ell-1})\mapsto
  z_1+\cdots+z_{\ell-1}+\frac{1}{z_1\cdots z_{\ell-1}}
  $$
  of the uniform (Haar) probability measure on
  $(\Ss^1)^{\ell-1}$. Figure \ref{relations_or_not} below illustrates
  two examples. In the case of the polynomial $X^3 + 2X^2 + 3$, one
  can show that there are no non-trivial additive relations between
  the zeros of $g$, whereas in the case of the polynomial
  $X^3 + X+ 3$, there is clearly the relation given by the sum of the
  roots which equals zero (because the coefficient of $X^2$ is
  zero). We see that this difference between their module of additive
  relations translates into different limiting measures $\mu_g$ for
  the associated sums of additive characters. Since these two
  polynomials have Galois group $\mathfrak{S}_3$ over $\Qq$, these
  pictures will be fully explained in Section \ref{sec-examples},
  Example 2.
  
  \begin{figure}
    \centering
    \begin{subfigure}[b]{5cm} \label{somme_unif}
      \includegraphics[width=\textwidth]{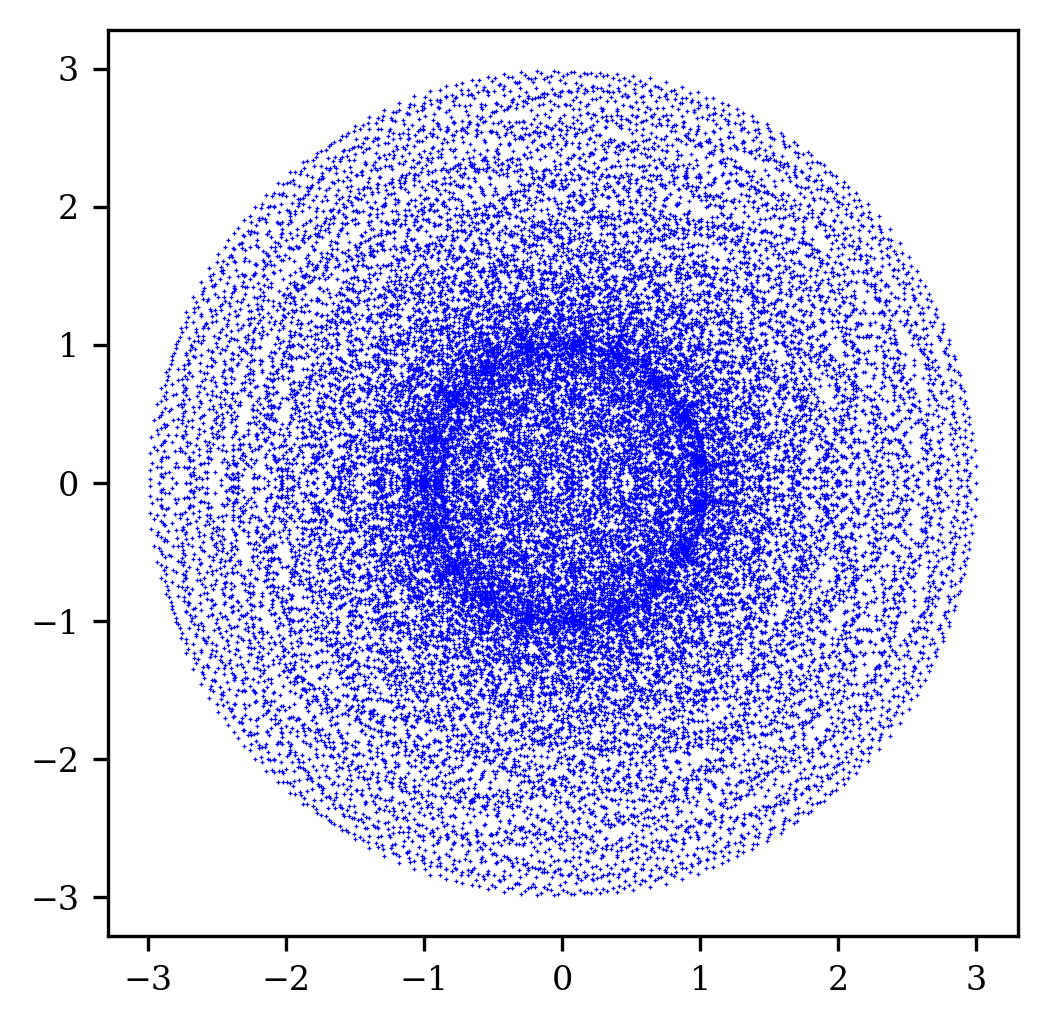}
      \caption{$g = X^3 + 2X^2 + 3$ and \\
        $q=30113$.}
    \end{subfigure}
    \hspace{0.6cm}
    \begin{subfigure}[b]{4.42cm}
      \includegraphics[width=\textwidth]{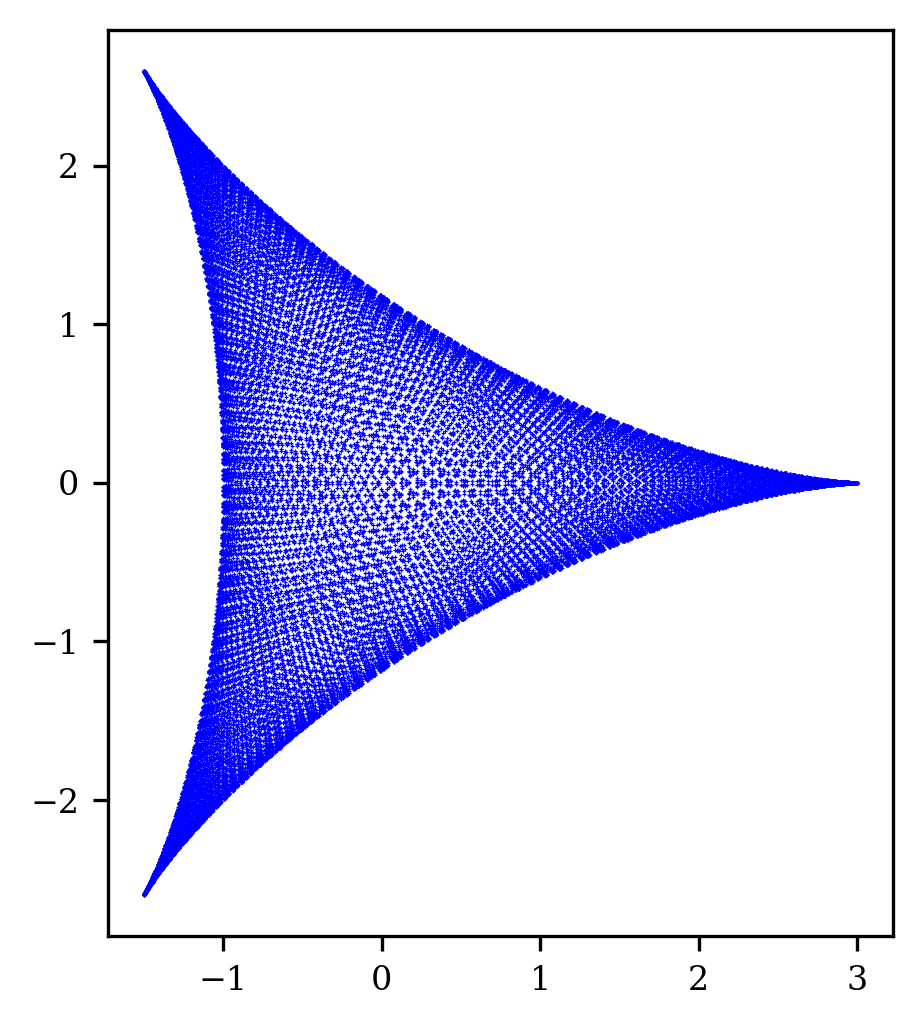}
      \caption{$g = X^3 + X + 3$ and $q=30223$.}
    \end{subfigure}
    \caption{The sums $\sum_{x \in Z_g(\Ff_q)} e( \frac{ax}{q})$
      as $a$ varies in $\Ff_q$, for two different polynomials
      $g$ of degree $3$.}
    \label{relations_or_not}
  \end{figure}
  \par
  (3) The second part of the theorem also has precursors: for instance,
  the result follows from~\cite[Prop.\,3.2]{clt} if
  $g=(X-1)\cdots (X-d)$. We illustrate our generalization in Figure \ref{sum_of_3_ST} with the example of another polynomial $g$ of degree 3.
  
  \begin{figure}
    \centering \includegraphics[width =
    0.6\textwidth]{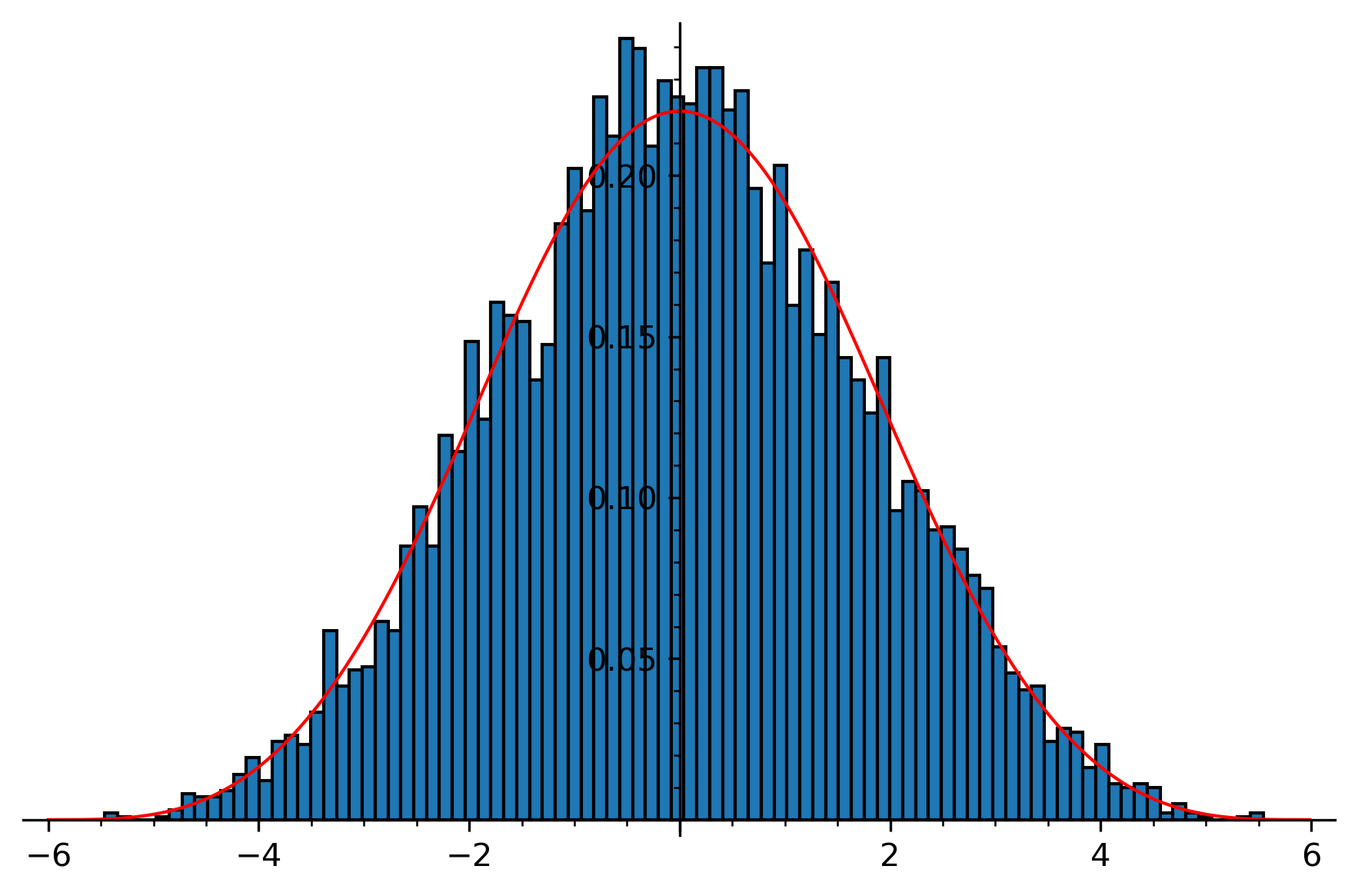}
    \caption{Distribution of the values of the sums
      $\sum_{x \in Z_g(\Ff_q)} \mathrm{Kl}_2(ax;q)$ as $a$ varies
      in $\Ff_q$, for $g = X^3 -9X -1$ and $q = 8089$. The red
      curve is the probability density function of the random
      variable $X_1 + X_2 + X_3$ defined as the sum of three
      independent and identically distributed Sato--Tate random
      variables.}
    \label{sum_of_3_ST}
  \end{figure}
\end{example}

\begin{remark}
  We assume that~$g$ is monic mostly for simplicity to ensure that the
  roots of~$g$ are algebraic integers. However, one can also handle an
  arbitrary polynomial~$g$ by considering an integer~$N\geq 1$ such
  that $Nz$ is integral for all roots~$z$ of~$g$, and either reduce to
  the monic case by using the polynomial~$\widetilde{g}$ with roots
  the~$Nz$, or by considering below the ring~$\Oo_g[1/N]$
  of~$N$-integers in~$K_g$ instead of the full ring of integers.
\end{remark}

\subsection*{Notation}
\label{section notations}

Let~$G$ be a locally compact abelian group, with character
group~$\widehat{G}$. Let~$H$ be a closed subgroup of~$G$. We recall that
the restriction homomorphism from $\widehat{G}$ to $\widehat{H}$ is
surjective (in other words, any character of $H$ can be extended to a
character of $G$).

The \emph{orthogonal} $H^{\perp}$ of~$H$ is the closed subgroup
of~$\widehat{G}$ defined by
$$
H^{\perp}=\{\chi \in \widehat{G}\,\mid\, \chi(x)=1\text{ for all } x \in
H\}.
$$

If we identify the dual of~$\widehat{G}$ with~$G$ by Pontryagin duality,
then the orthogonal of~$H^{\perp}$ is identified with~$H$, or in other
words
$$
H=\{x \in G\,\mid\, \chi(x)=1\text{ for all } \chi \in
H^{\perp}\}.
$$

We refer, e.g., to Bourbaki's account~\cite{ts2} of Pontryagin duality
for these facts.

Suppose that~$G$ is compact. A random variable with values in~$G$ is
said to be \emph{uniformly distributed on~$H$} if its law~$\mu$ is the
probability Haar measure on~$H$ (viewed as a probability measure
on~$G$).

Throughout this paper, we will consider a fixed \emph{monic}
polynomial $g\in\Zz[X]$ of degree~$d\geq 1$. We denote by $Z_g$ the
set of zeros of~$g$ in~$\Cc$, and more generally by $Z_g(A)$ the set
of zeros of~$g$ in any commutative ring~$A$. We further denote by
$K_g$ the splitting field of~$g$ in~$\Cc$, so that
$K_g=\Qq(Z_g)$. Since our discussion only depends on $Z_g$, we will
assume, without loss of generality, that $g$ is separable. Since~$g$
is monic, the set~$Z_g$ is contained in the ring of integers~$\Oo_g$
of~$K_g$.

For any set~$X$, we denote by $C(Z_g;X)$ the set of functions
$Z_g\to X$, and in particular we write $C(Z_g)=C(Z_g;\Cc)$ for the
vector space of $\Cc$-valued functions on~$Z_g$. We denote by $\sigma$
the linear form on~$C(Z_g)$ defined by
$$
\sigma(f)=\sum_{x\in Z_g}f(x),
$$
and by $\gamma$ the morphism of abelian groups from $C(Z_g;\Zz)$ to~$K_g$
defined by
\begin{equation}\label{eq-gamma}
  \gamma(f)=\sum_{x\in Z_g}f(x)x.
\end{equation}

The set $C(Z_g;\Ss^1)$ is a compact abelian group, isomorphic to
$(\Ss^1)^{|Z_g|}$ by sending $\alpha$ to $(\alpha(z))_{z\in Z_g}$.

\subsection*{Acknowledgements} The second author wishes to thank
Florent Jouve and Guillaume Ricotta for many helpful discussions, and
Emanuele Tron for giving the key ideas of the proof of the linear
independence of $j$-invariants. Pictures were made using the
open-source software \texttt{sagemath}.

\section{The case of additive characters}\label{sec-proofs1}

We begin with the simple setup of the first part of Theorem~\ref{th-1}
before considering a much more general situation.

We fix a separable monic polynomial~$g\in\Zz[X]$ as in the previous
discussion.  Let $p$ be a non-zero prime ideal in $\Oo_g$. We denote
by $|p|=|\Oo_g/p|$ the norm of~$p$. For any ideal $I\subset \Oo_g$,
the canonical projection $\Oo_g\to \Oo_g/I$ will be denoted
$\varpi_I$, or simply $\varpi$ when the ideal is clear from context.

We denote by $\mathcal{S}_g$ the set of prime ideals $p\subset \Oo_g$
which do not divide the discriminant of~$g$ (so that the reduction map
modulo~$p$ is injective on~$Z_g$) and have residual degree one (in
particular, these are unramified primes). For $p\in\mathcal{S}_g$, the
norm $q=|p|$ is a prime number, and for any integer $n\geq 1$, the
restriction $\Zz\to \Oo_g/p^n$ of~$\varpi_{p^n}$ induces a ring
isomorphism $\Zz/q^n\Zz\to \Oo_g/p^n$. We will usually identify these
two rings. Moreover, for $p\in\mathcal{S}_g$ and $n\geq 1$, the
separable polynomial $g$ has $\deg(g)$ different roots in the completion
of~$K_g$ at~$p$, hence the reduction map modulo $p^n$ induces a
bijection $Z_g\to Z_g(\Oo_g/p^n)=Z_g(\Zz/|p|^n\Zz)$ for any integer
$n\geq 1$.

For any prime ideal $p\in\mathcal{S}_g$ and any integer~$n\geq 1$, we
view $\Oo_g/p^n$ as a finite probability space with the uniform
probability measure.  We define random variables $U_{p^n}$
on~$\Oo_g/p^n$, taking values in $C(Z_g;\Ss^1)$, by
$$
U_{p^n}(a)(x)=e\Bigl(\frac{a\varpi(x)}{|p|^n}\Bigr),
$$
where $\varpi=\varpi_{p^n}$ here (according to our convention,
$a\varpi(x)$ is an element of $\Oo_g/p^n$ which is identified to an
element of $\Zz/|p|^n\Zz$).

\begin{remark}
  In the earlier references \cite{dgl}, \cite{ghl} and \cite{untrau},
  we have $g=X^d-1$ for some
  integer~$d$, and one considers primes $q\equiv
  1\mods{d}$. A primitive $d$-th root of unity modulo~$q$, say
  $w_q$, is fixed for all
  such~$q$, and one considers the limit
  as~$q\to~\infty$ of the tuples $ (e(\frac{aw_q^k}{q}))_{0 \leqslant
    k \leqslant d-1}$, for $a$ uniform
  in~$\Ff_q$.  This approach does not generalize in a convenient way
  to more general
  polynomials~$g$, where the roots are not as easily parameterized.
\end{remark}

\begin{proposition}[Ultra-short equidistribution]\label{pr-1}
  The random variables $U_{p^n}$ converge in law as $|p|^n \to+\infty$
  to a random function $U\colon Z_g\to \Ss^1$ such that~$U$ is
  uniformly distributed on the subgroup $H_g\subset C(Z_g;\Ss^1)$ which
  is orthogonal to the abelian group
  $$
  R_g=\ker(\gamma)= \{ \alpha\in C(Z_g;\Zz)\,\mid\, \sum_{x\in
    Z_g}\alpha(x)x=0 \}
  $$
  of (integral) additive relations betweens roots of~$g$, i.e.
  $$
  H_g=\{f\in C(Z_g;\Ss^1)\,\mid\, \text{ for all $\alpha\in R_g$, we
    have } \prod_{x\in Z_g}f(x)^{\alpha(x)}=1\}.
  $$
\end{proposition}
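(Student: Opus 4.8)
The plan is to prove convergence in law by establishing convergence of all the "moments" of $U_{p^n}$, i.e. the expectations of characters of the compact group $C(Z_g;\Ss^1)$, and then identifying the limiting measure via its vanishing on exactly the right set of characters. Recall that $\widehat{C(Z_g;\Ss^1)}\cong C(Z_g;\Zz)$, where a function $\alpha\in C(Z_g;\Zz)$ acts on $f\in C(Z_g;\Ss^1)$ by $f\mapsto \prod_{x\in Z_g}f(x)^{\alpha(x)}$. By the Weyl/Lévy criterion for equidistribution in compact groups, it suffices to show that for each fixed $\alpha\in C(Z_g;\Zz)$,
\[
\expect\Bigl[\prod_{x\in Z_g}U_{p^n}(a)(x)^{\alpha(x)}\Bigr]\longrightarrow \begin{cases}1 & \text{if }\alpha\in R_g,\\ 0 & \text{if }\alpha\notin R_g,\end{cases}
\]
as $|p|^n\to+\infty$, and then to observe that the right-hand side is precisely the integral of that character against the Haar probability measure on $H_g=R_g^{\perp}$ (since a character of a compact abelian group integrates to $1$ against the Haar measure of a closed subgroup if it is trivial on that subgroup, and to $0$ otherwise — and $R_g^{\perp\perp}=R_g$ by Pontryagin duality, with $R_g$ being the annihilator of $H_g$).

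First I would unwind the left-hand side. Using the definition $U_{p^n}(a)(x)=e(a\varpi(x)/|p|^n)$ and additivity of $e(\cdot/|p|^n)$, the product over $x$ collapses:
\[
\prod_{x\in Z_g}U_{p^n}(a)(x)^{\alpha(x)} = e\Bigl(\frac{a}{|p|^n}\sum_{x\in Z_g}\alpha(x)\varpi(x)\Bigr) = e\Bigl(\frac{a\,\varpi(\gamma(\alpha))}{|p|^n}\Bigr),
\]
where $\gamma(\alpha)=\sum_{x\in Z_g}\alpha(x)x\in\Oo_g$ and we use that $\varpi=\varpi_{p^n}$ is a ring homomorphism. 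Now $a$ ranges uniformly over $\Oo_g/p^n\cong\Zz/|p|^n\Zz$, so the expectation is the standard average of an additive character:
\[
\expect\Bigl[e\Bigl(\frac{a\,\varpi(\gamma(\alpha))}{|p|^n}\Bigr)\Bigr] = \frac{1}{|p|^n}\sum_{a\in\Zz/|p|^n\Zz} e\Bigl(\frac{a\,\varpi(\gamma(\alpha))}{|p|^n}\Bigr) = \charfun\bigl(\varpi_{p^n}(\gamma(\alpha))=0\text{ in }\Oo_g/p^n\bigr),
\]
i.e. it equals $1$ if $p^n\mid \gamma(\alpha)$ in $\Oo_g$ and $0$ otherwise. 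If $\alpha\in R_g$ then $\gamma(\alpha)=0$ and the expectation is $1$ for every $p$ and $n$, giving the first case immediately. If $\alpha\notin R_g$, then $\gamma(\alpha)$ is a fixed nonzero element of $\Oo_g$; since $\Oo_g$ is a Dedekind domain, the prime ideal $p$ divides $\gamma(\alpha)$ to bounded order, and in particular $p^n\nmid\gamma(\alpha)$ once $|p|^n$ exceeds $|N_{K_g/\Qq}(\gamma(\alpha))|$ (a fixed bound depending only on $\alpha$). Hence the expectation is eventually $0$ as $|p|^n\to+\infty$, giving the second case.

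The only mildly delicate point — and the step I would flag as the main one to get right — is the passage from "convergence of all characters" to "convergence in law to the claimed measure." This is where one invokes compactness of $C(Z_g;\Ss^1)$: the space of probability measures on a compact metrizable group is weak-* compact, so $(\mathrm{law}(U_{p^n}))$ has convergent subsequences, and any subsequential limit $\nu$ has $\widehat{\nu}(\alpha)$ equal to the limit computed above, namely $\charfun(\alpha\in R_g)$. Since a probability measure on a compact abelian group is determined by its Fourier transform, $\nu$ is unique and equals the measure whose Fourier transform is $\charfun_{R_g}$ — which is exactly the Haar probability measure on $H_g=R_g^{\perp}$, because for that Haar measure $\int_{H_g}\prod_x f(x)^{\alpha(x)}\,d\mathrm{Haar}(f)$ is $1$ when $\alpha$ annihilates $H_g$ (i.e. $\alpha\in R_g$, using $R_g^{\perp\perp}=R_g$) and $0$ otherwise. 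Therefore the full sequence converges in law to $U$ uniformly distributed on $H_g$, as claimed. I would also remark that the stated equidistribution of the complex-valued sums $\sum_{x\in Z_g(\Ff_q)}e(ax/q)$ in Theorem~\ref{th-1}(1) then follows by pushing $U$ forward under the continuous map $\sigma\colon C(Z_g;\Ss^1)\to\Cc$, $f\mapsto\sum_{x\in Z_g}f(x)$, after identifying $Z_g(\Ff_q)$ with $Z_g$ via reduction mod $p$ for $p\in\mathcal{S}_g$.
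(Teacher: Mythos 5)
Your proposal is correct and follows essentially the same route as the paper: apply the Weyl criterion on the compact group $C(Z_g;\Ss^1)$, identify characters with $\alpha\in C(Z_g;\Zz)$, compute the Weyl sums by orthogonality of additive characters modulo $|p|^n$ to get the indicator of $\varpi_{p^n}(\gamma(\alpha))=0$, and note this stabilizes to $\charfun(\alpha\in R_g)$ once $|p|^n$ is large, which matches the Fourier transform of the Haar measure on $H_g=R_g^{\perp}$ via $R_g^{\perp\perp}=R_g$. The extra details you supply (the norm bound forcing $p^n\nmid\gamma(\alpha)$ eventually, and the explicit weak-* compactness justification) are fine elaborations of steps the paper treats more briskly.
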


\begin{proof}
  Since $C(Z_g;\Ss^1)$ is a compact abelian group, we can apply the
  generalized Weyl Criterion for equidistribution: it is enough to
  check that, for any character $\eta$ of $C(Z_g;\Ss^1)$, we have
  $$
  \expect(\eta(U_{p^n}))\to \expect(\eta(U))
  $$
  as $|p|^n\to +\infty$. The right-hand side is either~$1$ or~$0$,
  depending on whether the restriction of~$\eta$ to~$H_g$ is trivial or
  not.

  The character~$\eta$ is determined uniquely by a function
  $\alpha\in C(Z_g;\Zz)$ by the rule
  $$
  \eta(f)=\prod_{x\in Z_g}f(x)^{\alpha(x)}
  $$
  for any $f\in C(Z_g;\Ss^1)$.  We have then by definition
  $$
  \expect(\eta(U_{p^n})) = \frac{1}{|p|^n} \sum_{a\in \Oo_g/p^n}
  e\Bigl(\frac{a}{|p|^n} \varpi\Bigr( \sum_{x\in Z_g}\alpha(x)x\Bigr)
  \Bigr).
  $$

  Simply by orthogonality of the characters modulo $|p|^n$,
  this sum is either~$1$ or~$0$, depending on whether
  $$
  \gamma(\alpha)=\sum_{x\in Z_g}\alpha(x)x
  $$
  is zero modulo $p^n$ or not. As soon as $|p|^n$ is large enough,
  this condition is equivalent with $\gamma(\alpha)$ being zero or not
  in~$K_g$. In particular, the limit of~$\expect(\eta(U_{p^n}))$ is
  either~$1$ or~$0$ depending on whether $\alpha\in \ker(\gamma)=R_g$
  or not, and this is exactly what we wanted to prove.
\end{proof}

\begin{remark}
  The proof shows that in fact the Weyl sums are \emph{stationary}. This
  somewhat unusual feature\footnote{\ Though there are important
    instances of limit theorems where \emph{moments} are stationary,
    e.g. in the convergence of the number of fixed points of random
    permutations to a Poisson distribution.} explains the very regular
aspect of the experimental pictures. We will explore further
consequences of this fact in a later work.
\end{remark}

\begin{corollary} \label{cor-additive-characters} For $a$ taken
  uniformly at random in $\Oo_g/p^n$ with $p\in\mathcal{S}_g$, lying
  above a prime number $q$ which does not divide $\disc(g)$, the sums
  $$
  \sum_{x\in Z_g(\Oo_g/ p^n)}e\Bigl(\frac{ax}{|p|^n}\Bigr)
  $$
  become equidistributed in~$\Cc$ as $|p|^n\to +\infty$ with limiting
  measure~$\mu_g$ given by the law of $\sigma(U)$, where $U$ is
  uniformly distributed on~$H_g$.
  \par
  Similarly, for a prime number $q$ totally split in~$K_g$ and
  not dividing the discriminant of~$g$, the sums
  $$
  \sum_{\substack{x\in \Zz / q^n \Zz\\g(x) \equiv 0
      \mods{q^n}}}e\Bigl(\frac{ax}{q^n}\Bigr)
  $$
  for $a\in \Zz / q^n \Zz$ become equidistributed in~$\Cc$ as
  $q^n \to+\infty$ with limit $\sigma(U)$.
\end{corollary}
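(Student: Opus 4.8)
The plan is to deduce both assertions directly from Proposition~\ref{pr-1} by pushing forward the convergence in law along the continuous map $\sigma$. First I would record that $\sigma$, restricted to the compact group $C(Z_g;\Ss^1)\subset C(Z_g)$, is continuous, being a finite sum $f\mapsto \sum_{x\in Z_g}f(x)$ of the (continuous) evaluation maps $f\mapsto f(x)$. Proposition~\ref{pr-1} asserts that $U_{p^n}\to U$ in law in $C(Z_g;\Ss^1)$ as $|p|^n\to+\infty$, with $U$ uniform on $H_g$; since composition with a continuous map preserves convergence in law, this gives $\sigma(U_{p^n})\to\sigma(U)$ in law in $\Cc$. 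By definition this is exactly the claimed equidistribution in $\Cc$, with limiting measure $\mu_g$ equal to the law of $\sigma(U)$.

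It then remains to identify $\sigma(U_{p^n})$ with the sum in the statement. By construction, for $a$ uniform in $\Oo_g/p^n$,
$$
\sigma(U_{p^n})(a)=\sum_{x\in Z_g}U_{p^n}(a)(x)=\sum_{x\in Z_g}e\Bigl(\frac{a\varpi(x)}{|p|^n}\Bigr),
$$
where $\varpi=\varpi_{p^n}$. Because $p\in\mathcal{S}_g$ lies above a prime $q\nmid\disc(g)$, the reduction map $\varpi_{p^n}$ induces a bijection $Z_g\to Z_g(\Oo_g/p^n)$ (as recalled before Proposition~\ref{pr-1}), so reindexing the sum by $y=\varpi(x)$ turns it into $\sum_{y\in Z_g(\Oo_g/p^n)}e(ay/|p|^n)$. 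Combined with the previous paragraph this yields the first assertion.

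For the second assertion I would note that if $q$ is a prime number totally split in $K_g$ and not dividing $\disc(g)$, then every prime ideal $p\subset\Oo_g$ above $q$ has residual degree one and does not divide $\disc(g)$, hence $p\in\mathcal{S}_g$ and $|p|=q$. Fixing such a $p$ for each such $q$, the ring isomorphism $\Zz/q^n\Zz\cong\Oo_g/p^n$ induced by $\varpi_{p^n}$ carries $\{x\in\Zz/q^n\Zz:g(x)\equiv 0\mods{q^n}\}=Z_g(\Zz/q^n\Zz)$ onto $Z_g(\Oo_g/p^n)$ and is compatible with the additive characters $e(\,\cdot\,/q^n)$. Hence the sum $\sum_{x\in\Zz/q^n\Zz,\,g(x)\equiv 0}e(ax/q^n)$ equals $\sigma(U_{p^n})(a)$ under this identification, and the claim follows from the first part applied along the sequence of these primes~$p$ (for which $|p|^n=q^n\to+\infty$).

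I do not expect a genuine obstacle: the whole analytic content sits in Proposition~\ref{pr-1}, and the only points needing a word of care are the continuity of $\sigma$ on the compact group $C(Z_g;\Ss^1)$, which licenses the passage to the pushforward law, and the bookkeeping that matches the three descriptions of the index set (the complex zeros $Z_g$, the set $Z_g(\Oo_g/p^n)$, and the roots of $g$ in $\Zz/q^n\Zz$) through the reduction isomorphisms available precisely because $p\in\mathcal{S}_g$.
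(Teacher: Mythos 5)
Your proposal is correct and follows essentially the same route as the paper: push forward the convergence of Proposition~\ref{pr-1} through the continuous map $\sigma$, identify $\sigma(U_{p^n})$ with the stated sums via the reduction bijection $Z_g\to Z_g(\Oo_g/p^n)$, and treat the second assertion by choosing a degree-one prime $p$ above each totally split $q$ and passing to that subsequence. The extra care you take with the continuity of $\sigma$ and the reindexing is exactly the content of the paper's (more terse) argument.
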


\begin{proof}
  Since $\varpi_{p^n}$ induces a bijection between $Z_g$ and
  $Z_g(\Oo_g/p^n)$, the random variables whose limit we are
  considering coincide with $\sigma(U_{p^n})$, and since $\sigma$ is a
  continuous function from $C(Z_g;\Ss^1)$ to~$\Cc$, we obtain the
  result from Proposition~\ref{pr-1} by composition.
  \par
  For the second part, we note that for any prime number $q$ which is
  totally split in~$\Oo_g$ and does not divide the discriminant of
  $g$, there exists a prime ideal $p \in~\mathcal{S}_g$ above~$q$, and
  for any $n\geq 1$, we have then $Z_g(\Zz/q^n\Zz)=Z_g(\Oo_g/p^n)$, so
  that
  $$
  \sum_{x\in Z_g(\Oo_g/ p^n)}e\Bigl(\frac{ax}{|p|^n}\Bigr) =
  \sum_{\substack{x\in \Zz / q^n \Zz\\g(x) \equiv 0
      \mods{q^n}}}e\Bigl(\frac{ax}{q^n}\Bigr),
  $$
  and the result follows from the first part since we are considering
  a subsequence of the random variables previously considered.
\end{proof}



Before studying a few examples in the next section, we make a few
remarks concerning the limiting measures. Since the random variable
$\sigma(U)$ is bounded, one can compute all its moments using the
equidistribution.  This leads straightforwardly to the formulas
$$
\expect(\sigma(U))=\begin{cases}
  0&\text{ if $0\notin Z_g$}\\
  1&\text{ if $0\in Z_g$},
\end{cases}
$$
and
$$
\expect(|\sigma(U)|^2)=|Z_g|.
$$




The fact that the expectation is zero if $g$ is irreducible of degree at
least~$2$ has some indirect relevance to the well-known conjecture
according to which the fractional parts of the roots modulo primes
$q\leq x$ of an irreducible polynomial~$g$ of degree at least~$2$ should
become equidistributed (with respect to the Lebesgue measure)
in~$\Rr/\Zz$ as $x\to +\infty$ -- see, e.g., the paper~\cite{dfi} of
Duke, Friedlander and Iwaniec.
\par
Indeed, the Weyl sums for this equidistribution problem are
(essentially)
$$
\frac{1}{\pi(x)}\sum_{q\leq x}\sum_{\substack{y\in
    \Ff_q\\g(y)=0}}e\Bigl(\frac{ay}{q}\Bigr)
$$
(where~$q$ ranges over primes) for some \emph{fixed} non-zero
integer~$a$.  For each prime~$q$ which happens to be totally split
in~$K_g$, the inner sum is of the form $\sigma(U_p(a))$ for some prime
ideal~$p\in\mathcal{S}_g$. Thus, Proposition~\ref{pr-1} tells us about
the asymptotic distribution of these terms \emph{when $a$ varies
  modulo~$q$}. Intuitively, we may hope that the average over~$q$
should lead to a limit which coincides with $\expect(\sigma(U))=0$,
and this would translate to the equidistribution conjecture.
\par
In fact, we may even ask whether these inner parts of the Weyl sums
for equidistribution are \emph{themselves} equidistributed. More
precisely, fix a non-zero integer~$a$, and consider the random
variables of the type
$$
U'_T(p)(x)=e\Bigl(\frac{a\varpi_p(x)}{|p|}\Bigr)
$$
defined on the probability spaces $\mathcal{S}_g(T)$ of primes~$p$ in
$\mathcal{S}_g$ with $|p|\leq T$ (with uniform probability
measure), and with values in~$C(Z_g;\Ss^1)$.

\begin{question}
  Do the random functions $U'_T$ converge in law as $T\to+\infty$? If
  Yes, is the limit the same as in Proposition~\ref{pr-1}?
\end{question}

If the answer to this question is positive, then the equidistribution
conjecture holds, at least when averaging only over primes totally
split in~$K_g$, since  then
$$
\frac{1}{|\mathcal{S}_g(T)|}\sum_{p\in\mathcal{S}_g(T)} \sum_{x\in
  Z_g}e\Bigl(\frac{a\varpi_p(x)}{|p|}\Bigr) \to
\int_{\Cc}zd\mu_g(z)=0.
$$

The answer is indeed positive when $g$ is irreducible of degree~$2$,
by the work of Duke, Friedlander and Iwaniec~\cite{dfi} and
Toth~\cite{toth} (more precisely, in this case the relevant inner Weyl
sums are essentially Salié sums, and it is proved -- using the
equidistribution property for the roots of quadratic congruences,
which is the main result of these papers -- that the Salié sums become
equidistributed in $[-2,2]$ like the sums $e(x)+e(-x)$ where~$x$ is
uniformly distributed in~$\Rr/\Zz$. Moreover, this question is closely
related with recent conjectures of
Hrushovski~\cite[\S\,5.5]{hrushovski}, themselves motivated by
questions concerning the model theory of finite fields with an
additive character.
\par
Numerical experiments also seem to suggest a positive answer at least
in many cases. But note also that obtaining the same limiting measure
depends on assuming that~$g$ is irreducible. (For instance, if there
is an integral root~$k$ for~$g$, as is the case with $k=1$ for
$X^d-1$, then the value $U'_T(p)(k)=e(ak/|p|)$ converges to~$1$ as
$|p|\to +\infty$, which is a different behavior than that provided by
Proposition~\ref{pr-1}.)

\section{Examples}\label{sec-examples}

We now consider a few examples of Proposition~\ref{pr-1}.
\par
(1) Suppose that $g=X^d-1$ for some $d\geq 1$, so that $Z_g=\mmu_d$ is
the group of $d$-th roots of unity.
\par
Consider first the case when $d=\ell$ is a prime number. The group of
additive relations is generated in this case by the constant function
$\alpha=1$ (indeed, let $\xi\in\mmu_{\ell}$ be a root of unity different
from~$1$; then a relation
$$
\sum_{x\in\mmu_\ell}\alpha(x)x=0
$$
is equivalent to $f(\xi)=0$, where $f$ is the polynomial
$$
\sum_{i=0}^{\ell-1}\alpha(\xi^i)X^i\in\Zz[X],
$$
which must therefore be an integral multiple of the minimal polynomial
$$
1+X+\cdots +X^{\ell-1}
$$
of $\xi$). The subgroup $H_{X^{\ell}-1}$ which is the  support of the
limit~$U$ in this case is then 
$$
H_{X^{\ell}-1}=\{f\colon \mmu_\ell \to\Ss^1\,\mid\,
\prod_{x\in\mmu_{\ell}}f(x)=1\}.
$$
which can be identified with~$(\Ss^1)^{\ell-1}$ by the group
isomorphism $f\mapsto (f(x))_{x\in\mmu_{\ell}\setminus\{1\}}$. The linear
form  $\sigma$ is then identified with the linear form
$(\Ss^1)^{\ell-1}\to \Cc$ such that
$$
(y_1,\ldots,y_{\ell-1})\mapsto y_1+\cdots
+y_{\ell-1}+\frac{1}{y_1\cdots y_{\ell-1}}.
$$
\par
In the case of a general~$d$, the same argument shows that
$R_{X^{d}-1}$ is the group of functions $\alpha\colon\mmu_d\to\Zz$
such that the $d$-th cyclotomic polynomial $\Phi_d$ divides
$$
\sum_{i=0}^{d-1}\alpha(\xi^i)X^i,
$$
where $\xi$ is a primitive $d$-th root of unity. Thus $R_{X^d-1}$ is a
free abelian group of rank $d-\varphi(d)$, generated by the
functions~$\alpha$ corresponding to the polynomials
$$
\Phi_d,\ X\Phi_d,\ \cdots,\ X^{d-\varphi(d)-1}\Phi_d.
$$
\par
Although this presentation is more abstract, it coincides with the
description of Duke, Garcia and Lutz in~\cite[Th.\,6.3]{dgl}.
\par
(2) The group of additive relations of a polynomial is studied by
Berry, Dubickas, Elkies, Poonen and Smyth~\cite{rels} in some detail
(see also~\cite{relations}). It is known for instance (see
e.g.~\cite[Prop.\,2.8]{relations} or~\cite[Prop.\,4.7.12]{repr}; this
goes back at least to Smyth~\cite{smyth}) that if the Galois group of
$K_g$ over~$\Qq$ is the symmetric group~$\mathfrak S_d$, then only two
cases are possible: either $R_g$ is trivial (in which case the limit
measure $\mu_g$ is the law of the sum of $d$ independent random
variables uniformly distributed on~$\Ss^1$) or $R_g$ is generated by
the constant function~$1$ (in which case the measure $\mu_g$ is the
same measure described in (1), except that $d$ is not necessarily
prime here). This second case corresponds to the situation where the
sum of the roots is zero, i.e., to the case when the coefficient of
$X^{d-1}$ in~$g$ is zero.
\par
  
(3) More interesting examples arise from polynomials $g$ that are
characteristic polynomials of ``random'' elements of the group of
integral matrices in a simple Lie algebra~$L$, where additive relations
corresponding to the root system of~$L$ will appear. For instance, for
the Lie algebra of type $G_2$, in its $7$-dimensional irreducible
representation, the roots of a characteristic polynomial have the form
of tuples
$$
(0,x,y,x+y,-x,-y,-x-y)
$$
so that the group of additive relations will be quite large. It would
be interesting to determine explicitly the support of the image
measure in this case.
\par
(4) Another natural example comes from the \emph{Hilbert class
  polynomial} $g=H_{\Delta}$, whose roots are the $j$-invariants of
elliptic curves with CM by an imaginary quadratic order~$\Oc$ of given
discriminant $\Delta$ (see, e.g.,~\cite[\S\,13,\,
Prop.\,13.2]{cox}). This means that we consider sums
\begin{equation} \label{sum-elliptic}
  \sum_{E\text{ with CM by } \Oc}e\Bigl(\frac{aj(E)}{q}\Bigr),
\end{equation}
where the sum runs over isomorphism classes over~$\Cc$ of elliptic
curves with CM by~$\Oc$, for prime numbers~$q$ totally split in the ring
class field corresponding to the order~$\Oc$. For instance, if
$\Delta=-4m$ with $m\geq 1$ squarefree, these are exactly the primes of
the form $x^2+my^2$ (see the book of Cox~\cite{cox} for details).

From Proposition \ref{pr-1}, and
Corollary~\ref{cor-additive-characters}, we know that the asymptotic
distribution of the sums \eqref{sum-elliptic}, as $q$ tends to
infinity and $a$ varies in $\Ff_q$, is governed by the additive
relations between these $j$-invariants. As it turns out, there are no
non-trivial relations, except for~$\Delta=-3$. This is essentially due
to the fact that there is one $j$-invariant (for fixed $\Oc$ with
discriminant large enough) which is much larger than the others,
combined with the following lemma.

\begin{lemma}
  Let~$g\in \Zz[X]$ be irreducible over~$\Qq$ of degree~$d\geq 2$. If
  there exists $x_0\in Z_g$ such that
  $$
  |x_0|>\sum_{\substack{x\in Z_g\\x\not=x_0}}|x|,
  $$
  then $R_g=\{0\}$.
\end{lemma}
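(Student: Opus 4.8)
The plan is to exploit the Galois action on $Z_g$ to show that any nontrivial relation would force $x_0$ to be paired with a conjugate, contradicting the dominance hypothesis. First I would suppose, for contradiction, that $\alpha\in R_g\setminus\{0\}$, so that $\sum_{x\in Z_g}\alpha(x)x=0$ with $\alpha(x)\in\Zz$ not all zero. Since $g$ is irreducible, the Galois group $\Gamma=\Gal(K_g/\Qq)$ acts transitively on $Z_g$. For each $\tau\in\Gamma$, applying $\tau$ to the relation yields $\sum_{x\in Z_g}\alpha(x)\,\tau(x)=\sum_{x\in Z_g}\alpha(\tau^{-1}(x))\,x=0$, so the transported function $\tau\cdot\alpha$ defined by $(\tau\cdot\alpha)(x)=\alpha(\tau^{-1}(x))$ also lies in $R_g$. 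Averaging (or rather summing) $\tau\cdot\alpha$ over $\tau\in\Gamma$ gives a $\Gamma$-invariant element of $R_g\otimes\Qq$, which must be a rational multiple of the constant function $\mathbf 1$ by transitivity; hence $\sum_x x=0$, i.e. the coefficient of $X^{d-1}$ in $g$ vanishes. That alone is not yet a contradiction, so the constant relation has to be removed first.

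The cleaner route, which I would actually follow, is to argue directly on a relation of \emph{maximal} coefficient. Choose $\alpha\in R_g\setminus\{0\}$ with $M:=\max_{x}|\alpha(x)|$ as small as possible, and pick $x_1\in Z_g$ with $|\alpha(x_1)|=M$. Choose $\tau\in\Gamma$ with $\tau(x_1)=x_0$ (transitivity). Then $\beta:=\tau\cdot\alpha\in R_g$ satisfies $|\beta(x_0)|=M$ and $|\beta(x)|\leq M$ for all $x$. Isolating the $x_0$ term in the relation $\sum_x\beta(x)x=0$ gives
$$
|\beta(x_0)|\,|x_0|=\Bigl|\sum_{x\neq x_0}\beta(x)x\Bigr|\leq\sum_{x\neq x_0}|\beta(x)|\,|x|\leq M\sum_{x\neq x_0}|x|<M\,|x_0|=|\beta(x_0)|\,|x_0|,
$$
a contradiction. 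Hence $R_g=\{0\}$.

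The one point that needs care — and the only place the argument could stumble — is the transitivity input: I must know that the Galois group of the \emph{splitting field} acts transitively on the roots, which is exactly irreducibility of $g$ over $\Qq$ (an irreducible polynomial's roots form a single Galois orbit). The hypothesis $d\geq 2$ is used only implicitly, in that for $d=1$ the set $\{x\in Z_g : x\neq x_0\}$ is empty and the inequality in the statement reads $|x_0|>0$, which is vacuous but also gives $R_g$ possibly nontrivial (e.g. $g=X$), so restricting to $d\geq 2$ is the natural and needed hypothesis. Everything else is the one-line triangle-inequality estimate above, so there is no real obstacle; the proof is short once the minimality-plus-transport trick is in place.
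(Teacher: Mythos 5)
Your proof is correct and takes essentially the same approach as the paper's: both use transitivity of the Galois action on $Z_g$ (coming from irreducibility) to transport a nontrivial relation so that the coefficient of maximal absolute value sits at $x_0$, and then conclude with the triangle inequality. The minimality of $M$ and the averaging digression in your first paragraph are unnecessary but harmless.
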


\begin{proof}
  Suppose that there exists $\alpha\in R_g$ non-zero. Let $x_1\in Z_g$ be
  such that $|\alpha(x_1)|$ is maximal, hence non-zero. Dividing
  by~$\alpha(x_1)$, we obtain a relation
  $$
  0=\sum_{x\in Z_g}\beta(x) x
  $$
  where $\beta(x)\in\Qq$ with $|\beta(x)|\leq 1$ for all~$x$ and
  $\beta(x_1)=1$. Since~$g$ is irreducible, we can find a Galois
  automorphism $\xi$ such that $\xi(x_1)=x_0$, which means that we may
  assume that~$x_1=x_0$. Then we obtain
  $$
  |x_0|=\Bigl|\sum_{x\not=x_0}\beta(x)x\Bigr|\leq \sum_{x\not=x_1}|x|,
  $$
  and we conclude by contraposition.
\end{proof}

This lemma is applicable to the Hilbert class polynomial
$H_{\Delta}$. Indeed, it is irreducible (see,
e.g.,~\cite[\S\,13]{cox}).  To check the existence of a dominating
$j$-invariant, we use the bound
$$ 
\Bigl| |j(\tau)| - e^{2\pi \Imag(\tau)}\Bigr| \leqslant 2079,
$$
for $\tau$ in the usual fundamental domain~$F$ of $\Hh$ modulo
$\SL_2(\Zz)$ (see \cite[Lemma\,1]{bilu-masser-zannier} by Bilu, Masser
and Zannier), combined with the fact that there is a unique $\tau$
in~$F$ such that $j(\tau)$ is a root of $H_{\Delta}$ and
$\Imag(\tau)\geq \sqrt{|\Delta|}/2$, while all other $j$-invariants
for the order $\Oc$ are of the form $j(\tau')$ where $\tau'\in F$ has
$\Imag(\tau')\leq \sqrt{|\Delta|}/4$
(see~\cite[Section\,3.3]{allombert} by Allombert, Bilu and
Pizarro-Madariaga).  These properties imply that the lemma is
applicable as soon as the bound
$$
e^{\pi \sqrt{|\Delta|}} - 2079>\deg(H_{\Delta})(e^{\pi
  \frac{\sqrt{|\Delta|}}{2}} + 2079)
$$
holds. The degree of~$H_{\Delta}$ is the Hurwitz class number, and one
knows classically that
$$
\deg H_{\Delta}\leq \frac{\sqrt{|\Delta|}}{\pi}(\log |\Delta|+2),
$$
(see, e.g.,~\cite[Lemma\,3.6]{b-h-k} by Bilu, Habegger and Kühne). One
checks easily that the desired bound follows unless $\Delta\geq
-9$. For the remaining cases, $H_{\Delta}$ has degree~$1$, and its
unique root is a non-zero integer, except that~$H_{-3}=X$ (see for
instance the table~\cite[\S\,12.C]{cox} in the book of
Cox). Therefore, unless $\Delta = -3$, the module of additive
relations of $H_{\Delta}$ is trivial. Of course, for~$\Delta=-3$, it
is isomorphic to~$\Zz$.

This immediately leads to the following corollary concerning the distribution of sums of
type \eqref{sum-elliptic}:

\begin{corollary}
  Fix a negative discriminant $\Delta \neq -3$ of an imaginary
  quadratic order~$\Oc$ with class number~$h$. As $q \to \infty$ among
  the primes totally split in the ring class field corresponding to
  the order $\Oc$, the sums
  $$
  \sum_{E\text{ \rm with CM by } \Oc}e\Bigl(\frac{aj(E)}{q}\Bigr)
  $$
  parametrized by $a \in \Ff_q$ become equidistributed in $\Cc$ with
  respect to the law of the sum $X_1+\cdots+X_{h}$ of $h$ independent
  random variables, each uniformly distributed on the unit circle.
\end{corollary}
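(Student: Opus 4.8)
The plan is to apply Corollary~\ref{cor-additive-characters} (hence Proposition~\ref{pr-1}) to the monic polynomial $g=H_{\Delta}\in\Zz[X]$, using the triviality of $R_{H_{\Delta}}$ that has just been established. First I would record the properties of $g=H_{\Delta}$ that we need: it is monic with integer coefficients, irreducible over~$\Qq$, of degree equal to the class number~$h$ of~$\Oc$, and its complex roots are exactly the $h$ pairwise distinct $j$-invariants of elliptic curves with CM by~$\Oc$; these are algebraic integers, so $Z_g\subset\Oo_g$ as in our general setup, and $|Z_g|=h$.

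Next I would compare $K_g$ with the ring class field. Since the $j$-invariants are all $\Qq$-conjugate, $K_g=\Qq(Z_g)$ is the Galois closure over~$\Qq$ of $\Qq(j(E))$ for one such curve~$E$; and since $\Qq(j(E))$ is a subfield of the ring class field of~$\Oc$, which is itself a Galois extension of~$\Qq$, it follows that $K_g$ is contained in that ring class field. Consequently, any prime number~$q$ totally split in the ring class field is totally split in~$K_g$; after discarding the finitely many primes dividing $\disc(H_{\Delta})$ (harmless for an equidistribution statement as $q\to+\infty$), such primes satisfy the hypotheses of Corollary~\ref{cor-additive-characters} with $n=1$.

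The essential arithmetic input is already in hand: by the lemma above, together with the explicit estimates quoted there (the bound of Bilu, Masser and Zannier on $|j(\tau)|$, the description of the $j$-invariants of~$\Oc$ due to Allombert, Bilu and Pizarro-Madariaga, and the classical upper bound for $\deg H_{\Delta}$), and the separate check in the finitely many cases $\Delta\geq -9$ where $H_{\Delta}$ is linear with a non-zero integral root, one has $R_g=R_{H_{\Delta}}=\{0\}$ whenever $\Delta\neq -3$. Inserting this into Proposition~\ref{pr-1}, the subgroup $H_g=R_g^{\perp}$ is the whole of $C(Z_g;\Ss^1)\cong(\Ss^1)^{h}$, so the limiting random function~$U$ is distributed according to the Haar measure of this full torus; equivalently, the $h$ values $U(x)$ for $x\in Z_g$ are mutually independent, each uniform on~$\Ss^1$.

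It remains to assemble the pieces. By Corollary~\ref{cor-additive-characters}, as $q\to+\infty$ over the primes totally split in the ring class field (and not dividing $\disc(H_{\Delta})$) with~$a$ uniform in~$\Ff_q$, the sums $\sum_{x\in Z_g(\Ff_q)}e(ax/q)$ become equidistributed with limit $\sigma(U)=\sum_{x\in Z_g}U(x)$; and under the reduction bijection $Z_g\to Z_g(\Ff_q)$ together with the identification $\Oo_g/p\cong\Zz/q\Zz$, this sum is exactly $\sum_{E\text{ with CM by }\Oc}e(aj(E)/q)$. Since $\sigma(U)$ is a sum of~$h$ independent uniform circle variables, the limiting law is that of $X_1+\cdots+X_h$, which is what we want. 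I do not expect a serious obstacle: the substance is entirely in the already-proved triviality of $R_{H_{\Delta}}$, and the only point requiring a moment's care is the inclusion of~$K_g$ in the ring class field, which is what permits the hypothesis to be phrased in terms of the ring class field rather than of~$K_g$.
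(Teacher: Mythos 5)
Your proposal is correct and follows exactly the route the paper intends: the corollary is stated as an immediate consequence of the triviality of $R_{H_{\Delta}}$ (established via the lemma and the $j$-invariant estimates) combined with Proposition~\ref{pr-1} and Corollary~\ref{cor-additive-characters}, so that $H_g$ is the full torus $(\Ss^1)^h$ and $\sigma(U)$ is a sum of $h$ independent uniform circle variables. Your additional check that $K_{H_\Delta}$ sits inside the ring class field (so that total splitting there implies total splitting in $K_g$) is a point the paper leaves implicit, and it is handled correctly.
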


On the other hand, for~$\Delta=-3$, we have
$$
\sum_{E\text{ \rm with CM by } \Oc}e\Bigl(\frac{aj(E)}{q}\Bigr)=1
$$
for all~$q$.


\section{Conditioning}

The basic argument leading to Proposition~\ref{pr-1} extends in another
nice way to the \emph{conditioning} situation, where we restrict the
random variables $U_{p^n}$ to suitable subsets of~$\Oo_g/p^n$. This
turns out to be closely related to the distribution of the fractional
parts of these subsets.

The precise statements require some additional notation. First, we
define by $\ind(g)$ the non-negative integer such that
$$
\Imag(\gamma)\cap \Zz=\ind(g)\Zz
$$
(recall the definition~(\ref{eq-gamma}) of~$\gamma$; note that it is
possible that~$\kappa=0$, e.g. for $g=X^2+d$ with $d\not=0$).

For a prime ideal $p\in\mathcal{S}_g$ and $n\geq 1$, and for any
$a\in\Oo_g/p^n$, we define the ``fractional part'' of~$a$ to be the
fractional part in $[0,1]$ of $\bar{a}/|p|^n$ for any lift
$\bar{a}\in\Zz$ of $a$ identified as an element of~$\Zz/|p|^n\Zz$.

We denote by $U$ the limit in Proposition~\ref{pr-1}.

\begin{proposition}[Ultra-short equidistribution]\label{pr-cond}
  For a subsequence of ideals $p^n$ with $p\in\mathcal{S}_g$ and
  $n\geq 1$, let $A_{p^n}$ be a non-empty subset of~$\Oo_g/p^n$. 
  \par
  \emph{(1)} If the fractional parts of $a\in A_{p^n}$ are
  \emph{uniformly equidistributed modulo~$1$} as $|p|^n\to +\infty$,
  in the sense that
  $$
  \max_{\substack{h\in \Oo_g/p^n\\h\not=0}}
  \frac{1}{|A_{p^n}|}\Bigl|\sum_{a\in A_{p^n}}
  e\Bigl(\frac{ah}{|p|^n}\Bigr)\Bigr|\to 0
  $$
  as $|p|^n\to +\infty$, then the restriction of the random variables
  $U_{p^n}$ to~$A_{p^n}$, viewed as probability space with uniform
  probability measure, converge in law to~$U$.
  \par
  \emph{(2)} Suppose that~$\ind(g)\not=0$ and that the restriction of
  the random variables $U_{p^n}$ to~$A_{p^n}$, viewed as probability
  space with uniform probability measure, converge in law to~$U$. Then
  the fractional parts of elements of $\ind(g)A_{p^n}$ are
  equidistributed modulo~$1$.
\end{proposition}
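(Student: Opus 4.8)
The plan is to run the generalized Weyl criterion on the compact abelian group $C(Z_g;\Ss^1)$, exactly as in the proof of Proposition~\ref{pr-1}, and to see how the two hypotheses on $A_{p^n}$ interact with the characters. Recall that a character of $C(Z_g;\Ss^1)$ is given by some $\alpha\in C(Z_g;\Zz)$ via $\eta_\alpha(f)=\prod_{x\in Z_g}f(x)^{\alpha(x)}$, that
\[
\eta_\alpha(U_{p^n}(a))=e\Bigl(\frac{a\,\varpi_{p^n}(\gamma(\alpha))}{|p|^n}\Bigr)\qquad(a\in\Oo_g/p^n),
\]
so that the expectation of $\eta_\alpha$ against the restriction of $U_{p^n}$ to $A_{p^n}$ (with its uniform measure) equals
\[
\expect\bigl(\eta_\alpha(U_{p^n}\!\restriction_{A_{p^n}})\bigr)=\frac{1}{|A_{p^n}|}\sum_{a\in A_{p^n}}e\Bigl(\frac{a\,h}{|p|^n}\Bigr),\qquad h:=\varpi_{p^n}(\gamma(\alpha)),
\]
and that, as established in the proof of Proposition~\ref{pr-1}, $\expect(\eta_\alpha(U))$ equals $1$ if $\alpha\in R_g$ and $0$ otherwise.

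For part (1), fix $\alpha$. If $\alpha\in R_g$ then $\gamma(\alpha)=0$, hence $h=0$ and the average above is identically $1=\expect(\eta_\alpha(U))$. If $\alpha\notin R_g$, then $\gamma(\alpha)$ is a fixed non-zero element of $\Oo_g$, so it does not lie in $p^n$ — equivalently $h\neq 0$ — as soon as $|p|^n$ is large enough (for instance once $|p|^n$ exceeds the absolute norm of $\gamma(\alpha)$); for those ideals the uniform equidistribution hypothesis, being a maximum over \emph{all} non-zero $h\in\Oo_g/p^n$, bounds the modulus of the average by a quantity tending to $0$, so $\expect(\eta_\alpha(U_{p^n}\!\restriction_{A_{p^n}}))\to 0=\expect(\eta_\alpha(U))$. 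The Weyl criterion then gives the claimed convergence in law.

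For part (2), by the classical Weyl criterion for equidistribution modulo~$1$ it suffices to show that for every fixed non-zero integer $m$,
\[
\frac{1}{|A_{p^n}|}\sum_{a\in A_{p^n}}e\Bigl(\frac{m\,\ind(g)\,a}{|p|^n}\Bigr)\longrightarrow 0\qquad\text{as }|p|^n\to+\infty.
\]
By definition of $\ind(g)$ we may write $\ind(g)=\gamma(\alpha_0)$ with $\alpha_0\in C(Z_g;\Zz)$; put $\alpha=m\alpha_0$, so that $\gamma(\alpha)=m\,\ind(g)$ and $\varpi_{p^n}(\gamma(\alpha))$ is the image of the integer $m\,\ind(g)$ in $\Oo_g/p^n\cong\Zz/|p|^n\Zz$. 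Hence the last display is exactly $\expect(\eta_\alpha(U_{p^n}\!\restriction_{A_{p^n}}))$, which by the standing hypothesis of (2) converges to $\expect(\eta_\alpha(U))$. Since $\ind(g)\neq 0$ and $m\neq 0$, we have $\gamma(\alpha)=m\,\ind(g)\neq 0$, i.e. $\alpha\notin R_g$, so $\expect(\eta_\alpha(U))=0$, as required.

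The argument is essentially formal, and there is no real obstacle; the points needing a little care are, in (1), that the element $h$ depends on \emph{both} $\alpha$ and the ideal $p^n$ — which is precisely why the hypothesis must be phrased uniformly in $h$ — and, in (2), that only the integers in $\ind(g)\Zz$ are values of $\gamma$, so that the conclusion must be about $\ind(g)A_{p^n}$ rather than $A_{p^n}$ itself, and the assumption $\ind(g)\neq0$ is what guarantees $\gamma(m\alpha_0)\neq0$.
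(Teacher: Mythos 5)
Your proof is correct and follows essentially the same route as the paper: both arguments reduce everything to the identity $\expect(\eta_\alpha(U_{p^n}\!\restriction_{A_{p^n}}))=\frac{1}{|A_{p^n}|}\sum_{a\in A_{p^n}}e(a\varpi(\gamma(\alpha))/|p|^n)$ and then read off (1) from the uniform bound over non-zero $h$ and (2) from the Weyl criterion applied to $\alpha$ with $\gamma(\alpha)=m\,\ind(g)$. The only (cosmetic) difference is that the paper derives this identity via a discrete Fourier expansion of the characteristic function of $A_{p^n}$, while you compute it directly from the definition of $U_{p^n}$; the conclusion and all subsequent steps coincide.
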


\begin{proof}
  We denote by $U'_{p^n}$ the restriction of~$U_{p^n}$ to~$A_{p^n}$,
  viewed as probability space with the uniform probability measure.

  We expand the characteristic function $f_{p^n}$ of $A_{p^n}$ in
  discrete Fourier series
  $$
  f_{p^n}(a)=\sum_{h\in
    \Oo_g/p^n}\alpha_{p^n}(h)e\Bigl(\frac{ha}{|p|^n}\Bigr)
  $$
  where
  $$
  \alpha_{p^n}(h)=\frac{1}{|p|^n}
  \sum_{a\in A_{p^n}}e\Bigl(-\frac{ha}{|p|^n}\Bigr).
  $$

  Let~$\eta$ be a character of $C(Z_g;\Ss^1)$, determined by
  $\alpha\in C(Z_g;\Zz)$ as in Proposition~\ref{pr-1}. By definition, we
  have
  \begin{align*}
    \expect(\eta(U'_{p^n}))&=\frac{1}{|A_{p^n}|} \sum_{a\in A_{p^n}}
    e\Bigl(\frac{a}{|p|^n} \varpi\Bigr( \sum_{x\in Z_g}\alpha(x)x\Bigr)
    \Bigr)
    \\
    &= \frac{1}{|A_{p^n}|} \sum_{h\in \Oo_g/p^n}\alpha_{p^n}(h)
    \sum_{a\in \Oo_g/p^n} e\Bigl(\frac{a}{|p|^n}(
    \varpi(\gamma(\alpha))+h) \Bigr)
    \\
    &=\frac{|p|^n}{|A_{p^n}|} \alpha_{p^n}(-\varpi(\gamma(\alpha)))=
    \frac{1}{|A_{p^n}|} \sum_{a\in
      A_{p^n}}e\Bigl(\frac{\varpi(\gamma(\alpha))a}{|p|^n}\Bigr),
  \end{align*}
  an identity between Weyl sums for the equidistribution of~$U_{p^n}$
  and Weyl sums for the equidistribution of the fractional parts of
  elements of~$A_{p^n}$.

  Suppose first that $A_{p^n}$ is uniformly equidistributed modulo~$1$.
  If $\gamma(\alpha)=0$, then we get
  $\expect(\eta(U'_{p^n}))=1$. Otherwise, for $|p|^n$ large enough, we
  get $\varpi(\gamma(\alpha))\not=0\in \Oo_g/p^n$, and therefore
  $$
  |\expect(\eta(U'_{p^n}))|\leq \max_{\substack{h\in
    \Oo_g/p^n \\ h \neq 0}}\frac{1}{|A_{p^n}|}\Bigl|\sum_{a\in A_{p^n}}
  e\Bigl(\frac{ah}{|p|^n}\Bigr)\Bigr|,
  $$
  which tends to~$0$ by assumption. This proves the first statement.

  Conversely, suppose that $\ind(g)\not=0$ and that $U'_{p^n}$ converges
  in law to~$U$. Let~$h\in\Zz\setminus \{0\}$. Pick
  $\alpha\in C(Z_g;\Zz)$ such that $\gamma(\alpha)=\ind(g)h$, which
  exists by definition of~$\ind(g)$. For all $p^n$, we get
  $$
  \frac{1}{|A_{p^n}|} \sum_{a\in
    A_{p^n}}e\Bigl(\frac{h\ind(g)a}{|p|^n}\Bigr)= \frac{1}{|A_{p^n}|}
  \sum_{a\in A_{p^n}}e\Bigl(\frac{\varpi(\gamma(\alpha))a}{|p|^n}\Bigr)=
  \expect(\eta(U'_{p^n}))
  $$
  where $\eta$ is the character of $C(Z_g;\Ss^1)$ corresponding
  to~$\alpha$. This character is not trivial on~$H_g$ (because
  $\gamma(\alpha)\not=0$), and therefore
  $$
  \lim_{|p|^n\to +\infty} \frac{1}{|A_{p^n}|} \sum_{a\in
    A_{p^n}}e\Bigl(\frac{h\ind(g)a}{|p|^n}\Bigr)=0,
  $$
  which proves equidistribution modulo~$1$ of fractional parts of
  $\ind(g)A_{p^n}$ by the Weyl Criterion.
\end{proof}

\begin{example}
  (1) Let $\alpha\in \Rr$ satisfy $0<\alpha<1$. Let $A_{p^n}$ be the
  set of classes corresponding to an interval of length
  $\sim \alpha |p|^n$ in $\Zz/|p|^n\Zz$. Then equidistribution (and a
  fortiori uniform equidistribution) of the fractional parts
  \emph{fails}, hence the second part implies, by contraposition, that
  if $\ind(g)=1$, then the random variables $U_{p^n}$ conditioned to
  have $a\in A_{p^n}$ \emph{do not} converge to~$U$.
  \par
  As an illustration, let $g= X^3 + X^2 + 2X + 1$. One checks quickly
  that~$g$ is irreducible, with Galois group $\mathfrak{S}_3$, so that
  Example~2 of Section \ref{sec-examples} implies that the sums
  \begin{equation} \label{sum_interval}
    \sum_{x \in Z_g(\Ff_q)}^{} e\Bigl( \frac{ax}{q} \Bigr),
  \end{equation}
  parametrized by $a \in \Ff_q$ for $q$ totally split in $K_g$, become
  equidistributed with respect to the measure $\mu_g$ which is the law
  of the sum of three independent random variables, each uniformly
  distributed on $\Ss^1$. A plot of the values
  $ \sum_{x \in Z_g(\Ff_q)} e(\tfrac{ax}{q})$ for $a \in \Ff_q$ would
  then be very similar to Figure~\ref{relations_or_not}, (A).

  However, this polynomial $g$ satisfies $\kappa(g) = 1$ (since the
  coefficient~$1$ of~$X^2$ shows that the sum of the roots, which is
  an element of~$\Imag(\gamma)\cap \Zz$, is $-1$) and hence these
  sums, parametrized by $a \in \{ 0, \dots, \frac{q-1}{2}\}$, do not
  become equidistributed with respect to the same measure. Numerical
  experiments confirm this (see Figure~\ref{different_limit}), but
  suggest that there is equidistribution with respect to another
  measure.
  
  \begin{figure}
    \centering
    \begin{subfigure}[b]{0.32\textwidth}
      \centering
      \includegraphics[width=\textwidth]{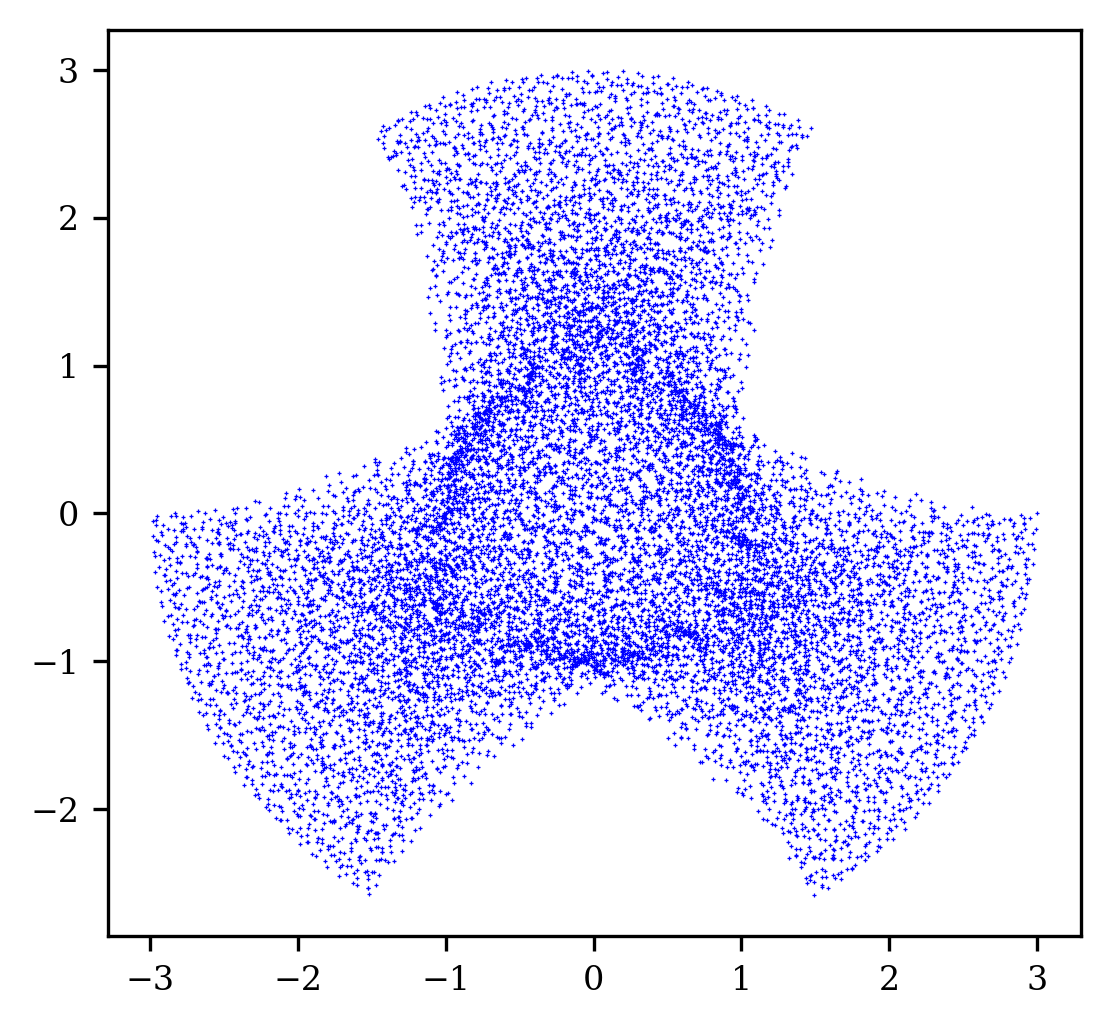}
      \caption{$q = 30307$}
    \end{subfigure}
    \hfill
    \begin{subfigure}[b]{0.32\textwidth}
      \centering
      \includegraphics[width=\textwidth]{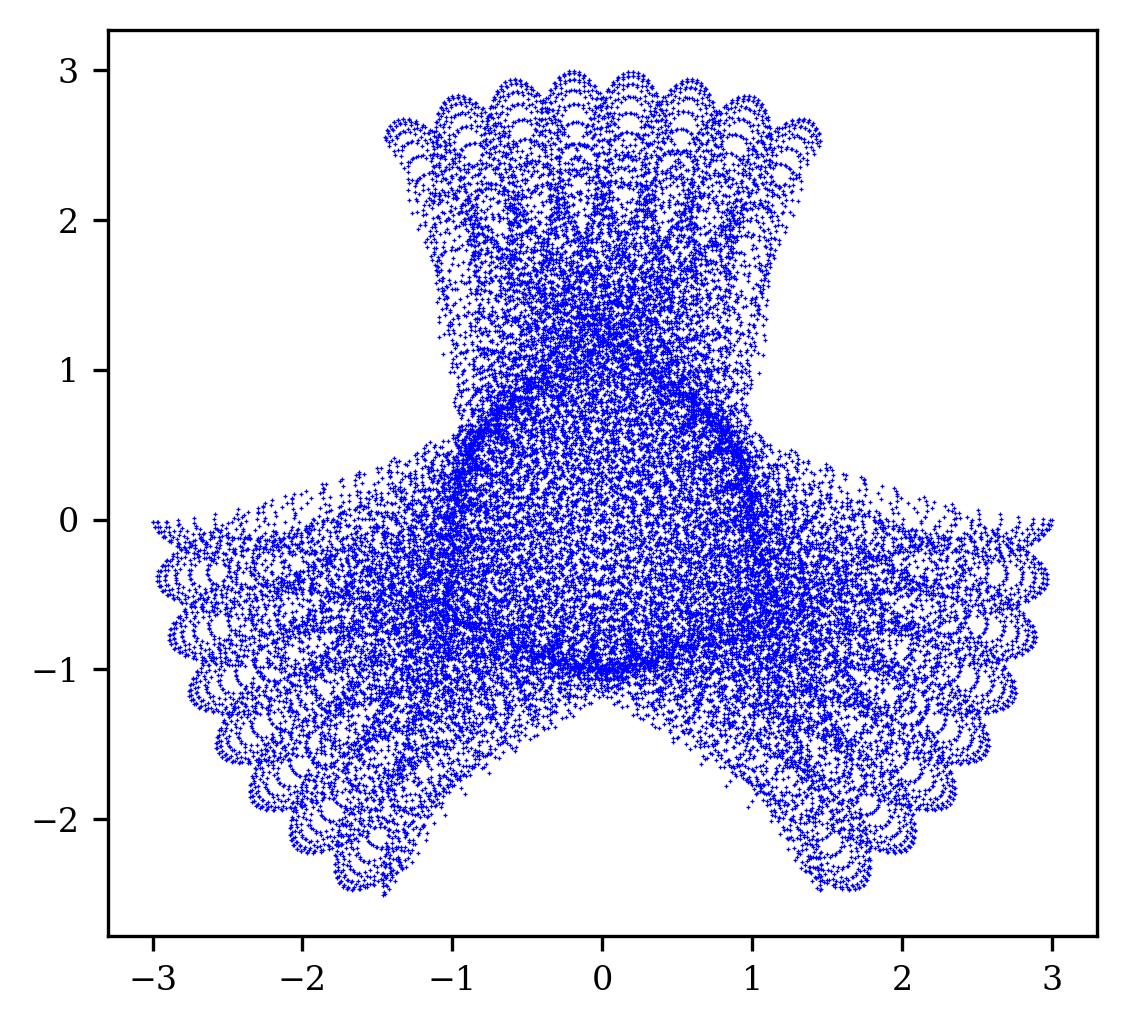}
      \caption{$q= 60383$}
    \end{subfigure}
    \hfill
    \begin{subfigure}[b]{0.32\textwidth}
      \centering
      \includegraphics[width=\textwidth]{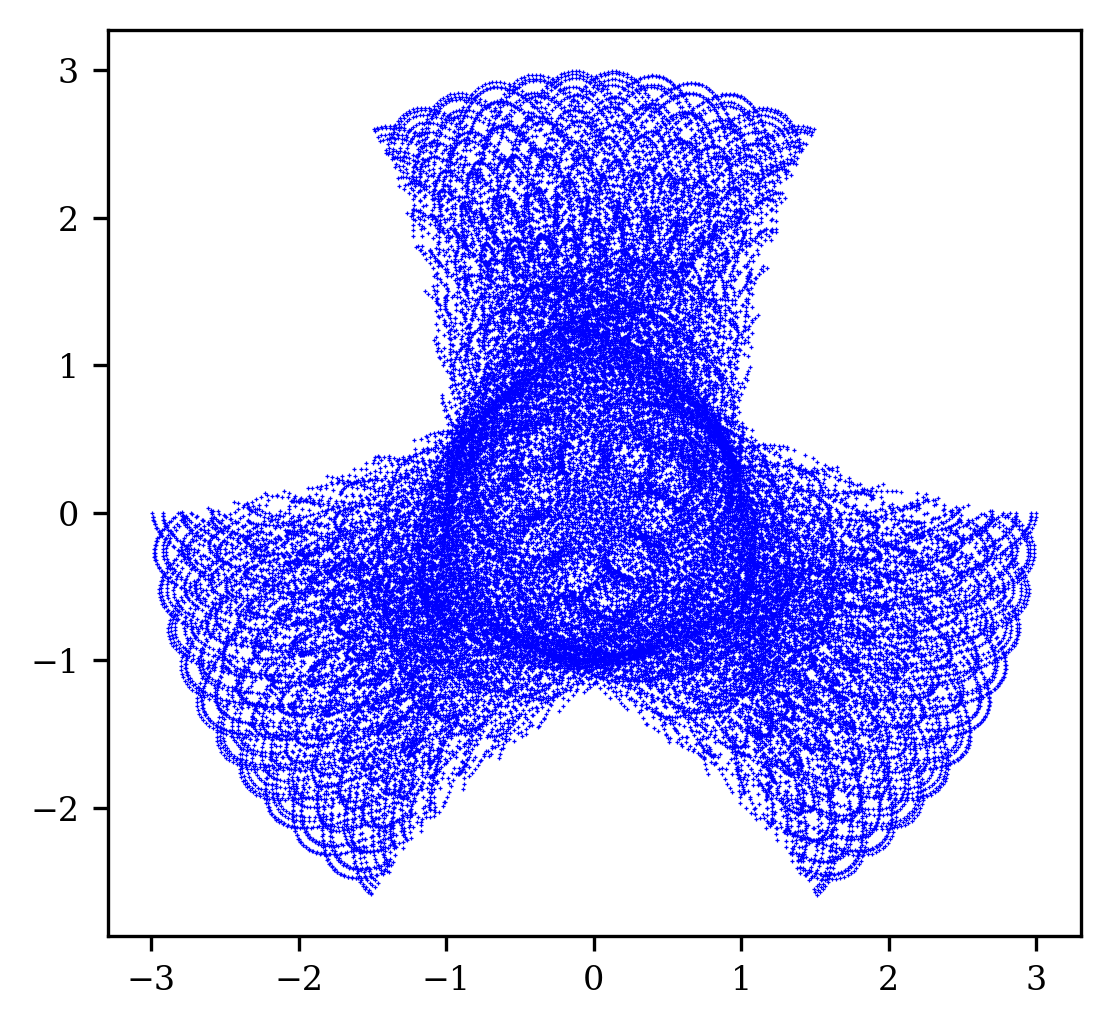}
      \caption{$q=100357$}
    \end{subfigure}
    \caption{The sums \eqref{sum_interval} for $a$ varying in
      $\{ 0, \dots, \frac{q-1}{2}\}$, for three values of $q$}
    \label{different_limit}
  \end{figure}
  \par
  (2) Many examples of uniformly equidistributed sets (modulo primes
  at least) are provided by using the theory of trace functions and
  the Riemann Hypothesis over finite fields. For instance, if
  $f\in \Zz[X]$ is a monic polynomial, then the fractional parts of
  elements of the sets $A_p=f(\Oo_p/p)\subset \Oo_g/p$ are uniformly
  equidistributed. Indeed, one derives, e.g.,
  from~\cite[Prop.\,6.7]{fkm2}, and the Riemann Hypothesis over finite
  fields that $|A_p|\gg |p|$ and that
  $$
  \frac{1}{|A_{p}|} \sum_{\substack{a\in \Oo_g/p\\a=f(b)\text{ for some
        $b$}}} e\Bigl(\frac{ha}{|p|}\Bigr) \ll \frac{1}{|p|^{1/2}}
  $$
  for all~$h\in (\Oo_g/p)^{\times}$, where the implied constant
  depends only on~$\deg(f)$. The simplest example is that of quadratic
  residues.
  \par
  (3) In the last estimate, since the implied constant depends only on
  the degree of the polynomial~$f$, one can take $f$ to depend
  on~$p$. It is natural to ask how large $\deg(f)$ can really be
  taken. The simplest ``test'' case is when $f=X^d$ is a monomial, and
  the question is then whether Proposition~\ref{pr-cond} applies to
  small \emph{multiplicative subgroups}
  $A_{p^n}\subset (\Oo_g/p^n)^{\times}$.

  Using a striking result of Bourgain~\cite{bourgainarbitrary}, and
  adapting an argument of Untrau~\cite[Prop.\,1.14]{untrau} (to show
  that if $\varpi_{p^n}(\gamma(\alpha))\not=0$, then its $|p|$-adic
  valuation is bounded as $p^n$ varies), one can deduce easily that the
  first part of Proposition~\ref{pr-cond} does indeed apply if there
  exists $\delta>0$ such that $A_{p^n}$ is a subgroup of
  $(\Oo_g/p^n)^{\times}$ with $|A_{p^n}|\gg |p|^{n\delta}$.

\end{example}

\section{Additive characters with more general polynomials}
\label{sec-generalizations}

Very simple adaptations of the proof of Proposition~\ref{pr-1} (which
are left to the reader) lead to the following more general statements,
the second of which was also studied by Untrau in the case $g=X^d-1$.

\begin{proposition}[Ultra-short equidistribution, 2]\label{pr-2}
  Let $v\in\Zz[X,X^{-1}]$ be a non-constant Laurent polynomial. Assume
  that $0\notin Z_g$. Define random variables $W_{p^n}$ on $\Oo_g/p^n$
  for $p\in\mathcal{S}_g$ dividing none of the roots of $g$ and
  $n \geqslant 1$, with values in $C(Z_g;\Ss^1)$, by
  $$
  W_{p^n}(a)(x)=e\Bigl(\frac{av(\varpi(x))}{|p|^n}\Bigr).
  $$
  \par
  The random variables $W_{p^n}$ converge in law as $|p|^n\to+\infty$ to the
  random function $W\colon Z_g\to \Ss^1$ such that~$W$ is uniformly
  distributed on the subgroup orthogonal to the abelian group
  $R_{g,v}\subset C(Z_g;\Zz)$ of additive relations between components
  of $(v(x))_{x\in Z_g}$, namely
  $$
  R_{g,v}=\{\alpha\colon Z_g\to\Zz\,\mid\, \sum_{x\in
    Z_g}\alpha(x)v(x)=0\}.
  $$
\end{proposition}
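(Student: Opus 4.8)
The plan is to repeat the proof of Proposition~\ref{pr-1} almost verbatim, the only genuinely new point being the bookkeeping made necessary by the fact that $v$ may involve negative powers of $X$, so that the values $v(x)$ for $x\in Z_g$ need not lie in $\Oo_g$. First I would observe that, $g$ being monic, $\prod_{x\in Z_g}x=\pm g(0)$ is a rational integer, which is non-zero since $0\notin Z_g$; hence every $x\in Z_g$ is invertible in the ring $\Oo_g[1/g(0)]$ of $g(0)$-integers of $K_g$, and in particular $v(x)\in\Oo_g[1/g(0)]$. Moreover, for $p\in\mathcal{S}_g$ dividing none of the roots of $g$ — equivalently, $|p|\nmid g(0)$ — the reduction $\varpi=\varpi_{p^n}$ extends uniquely to a ring homomorphism $\Oo_g[1/g(0)]\to\Oo_g/p^n$ (because $\varpi(g(0))$ is then a unit), and this extension satisfies $\varpi(v(x))=v(\varpi(x))$, the right-hand side making sense since $\varpi(x)$ is a unit in $\Oo_g/p^n$. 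One then introduces the group homomorphism
$$
\gamma_v\colon C(Z_g;\Zz)\longrightarrow K_g,\qquad \gamma_v(\alpha)=\sum_{x\in Z_g}\alpha(x)v(x),
$$
which takes values in $\Oo_g[1/g(0)]$ and whose kernel is $R_{g,v}$; it plays here the role of $\gamma$ and $R_g$ in Proposition~\ref{pr-1}.

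Next I would apply the generalized Weyl criterion on the compact abelian group $C(Z_g;\Ss^1)$: it suffices to prove $\expect(\eta(W_{p^n}))\to\expect(\eta(W))$ for each character $\eta$, which is given by a unique $\alpha\in C(Z_g;\Zz)$ via $\eta(f)=\prod_{x\in Z_g}f(x)^{\alpha(x)}$. On the target side, $\expect(\eta(W))$ is $1$ or $0$ according to whether $\eta$ restricts trivially to the orthogonal $R_{g,v}^{\perp}$ of $R_{g,v}$ or not, and by Pontryagin duality — the orthogonal inside the discrete group $C(Z_g;\Zz)$ of $R_{g,v}^{\perp}$ being $R_{g,v}$ itself — this happens exactly when $\alpha\in R_{g,v}$. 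On the source side, expanding $\eta(W_{p^n}(a))$ and using the compatibility $\varpi(v(x))=v(\varpi(x))$ yields
$$
\expect(\eta(W_{p^n}))=\frac{1}{|p|^n}\sum_{a\in\Oo_g/p^n}e\Bigl(\frac{a}{|p|^n}\,\varpi\bigl(\gamma_v(\alpha)\bigr)\Bigr),
$$
which, by orthogonality of the additive characters modulo $|p|^n$, equals $1$ if $\varpi_{p^n}(\gamma_v(\alpha))=0$ in $\Oo_g/p^n$ and $0$ otherwise.

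It then remains to take the limit. If $\alpha\in R_{g,v}$, then $\gamma_v(\alpha)=0$ and $\expect(\eta(W_{p^n}))=1$ for every $p^n$. If $\gamma_v(\alpha)\neq0$, then, since $|p|\nmid g(0)$ for the primes under consideration, $\gamma_v(\alpha)$ has non-negative $p$-adic valuation, which moreover vanishes for all but finitely many $p$; hence $\varpi_{p^n}(\gamma_v(\alpha))=0$ can occur only for finitely many pairs $(p,n)$, all with $|p|^n$ bounded, and therefore $\expect(\eta(W_{p^n}))\to0$. In either case the limit agrees with $\expect(\eta(W))$, which gives the claimed convergence in law. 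The only steps requiring any care — and the ones I would expect to be the (mild) obstacle — are the verification of the identity $\varpi(v(x))=v(\varpi(x))$ for $|p|\nmid g(0)$ and of the fact that a fixed non-zero element of $K_g$ is not $\equiv0\mods{p^n}$ for arbitrarily large $|p|^n$; everything else is formally identical to the proof of Proposition~\ref{pr-1}.
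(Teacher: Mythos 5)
Your proof is correct and is precisely the ``very simple adaptation'' of the proof of Proposition~\ref{pr-1} that the paper leaves to the reader: the Weyl criterion and orthogonality argument are identical, and you have correctly supplied the only genuinely new points, namely that $v(x)\in\Oo_g[1/g(0)]$, that reduction modulo $p^n$ extends to this ring and commutes with $v$ when $p\nmid g(0)$, and that a fixed non-zero $\gamma_v(\alpha)$ vanishes modulo $p^n$ for only finitely many $p^n$.
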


\begin{proposition}[Ultra-short equidistribution, 3]\label{pr-3}
  Let $k\geq 1$ be an integer and fix distinct integers $m_1$, \ldots,
  $m_k$ in~$\Zz$. Assume $0 \notin Z_g$. For $p\in\mathcal{S}_g$ dividing none of the roots
  of $g$ and $n \geqslant 1$, define random variables $Y_{p^n}$ on the
  space $(\Oo_g/p^n)^k$ with uniform probability measure, with values
  in $C(Z_g;\Ss^1)$, by
  $$
  Y_{p^n}(a_1,\ldots, a_k)(x)=e\Bigl(\frac{1}{|p|^n}
  \Bigl(\sum_{i=1}^k a_i\varpi(x)^{m_i}\Bigr)\Bigr)
  $$
  \par
  The random variables $Y_{p^n}$ converge in law as $|p|^n\to+\infty$ to the
  random function $Y\colon Z_g\to \Ss^1$ such that~$Y$ is uniformly
  distributed on the subgroup orthogonal to the abelian group
  $$
  \{\alpha\colon Z_g\to\Zz\,\mid\, \sum_{x\in
    Z_g}\alpha(x)x^{m_i}=0\text{ for } 1\leq i\leq k\}.
  $$
\end{proposition}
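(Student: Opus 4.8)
The plan is to run exactly the argument that proved Proposition~\ref{pr-1}, via the generalized Weyl criterion on the compact abelian group $C(Z_g;\Ss^1)$. First I would reduce to showing that, for every character $\eta$ of $C(Z_g;\Ss^1)$, one has $\expect(\eta(Y_{p^n}))\to\expect(\eta(Y))$ as $|p|^n\to+\infty$. Writing $\eta(f)=\prod_{x\in Z_g}f(x)^{\alpha(x)}$ for the unique $\alpha\in C(Z_g;\Zz)$ attached to $\eta$, the right-hand side equals $1$ if $\alpha$ lies in the relation group
$$
R=\{\alpha\colon Z_g\to\Zz\,\mid\, \sum_{x\in Z_g}\alpha(x)x^{m_i}=0\text{ for }1\le i\le k\}
$$
and $0$ otherwise. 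For $1\le i\le k$ I set $\gamma_i(\alpha)=\sum_{x\in Z_g}\alpha(x)x^{m_i}\in K_g$ (this makes sense because $0\notin Z_g$), so that $R=\bigcap_{i=1}^k\ker(\gamma_i)$; note that since $p$ divides none of the roots of $g$, each $\varpi(x)$ is a unit of $\Oo_g/p^n$, each $\gamma_i(\alpha)$ is a $p$-integer, and the image $\varpi(\gamma_i(\alpha))\in\Oo_g/p^n$ is well defined and equals $\sum_{x\in Z_g}\alpha(x)\varpi(x)^{m_i}$.

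Then I would compute the Weyl sum directly. By definition,
$$
\expect(\eta(Y_{p^n}))=\frac{1}{|p|^{nk}}\sum_{(a_1,\dots,a_k)\in(\Oo_g/p^n)^k}\ \prod_{x\in Z_g}e\Bigl(\frac{\alpha(x)}{|p|^n}\sum_{i=1}^k a_i\varpi(x)^{m_i}\Bigr).
$$
Interchanging the product over $x$ with the sum over the $a_i$ and reorganizing, the exponent becomes $\frac{1}{|p|^n}\sum_{i=1}^k a_i\,\varpi(\gamma_i(\alpha))$, so the sum factors as $\prod_{i=1}^k\sum_{a_i\in\Oo_g/p^n}e\bigl(a_i\varpi(\gamma_i(\alpha))/|p|^n\bigr)$. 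By orthogonality of additive characters modulo $|p|^n$, each factor equals $|p|^n$ if $\varpi(\gamma_i(\alpha))=0$ and $0$ otherwise; hence $\expect(\eta(Y_{p^n}))$ equals $1$ if $\varpi(\gamma_i(\alpha))=0$ for all $i$, and $0$ otherwise.

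Finally I would pass to the limit as in Proposition~\ref{pr-1}: for a fixed $\alpha$, each nonzero $\gamma_i(\alpha)$ is a fixed nonzero element of $K_g$, whose $p$-adic valuation is bounded independently of $n$, so once $|p|^n$ is large enough the condition $\varpi(\gamma_i(\alpha))=0$ is equivalent to $\gamma_i(\alpha)=0$ in $K_g$. Therefore $\lim\expect(\eta(Y_{p^n}))=1$ if $\alpha\in\bigcap_i\ker(\gamma_i)=R$ and $0$ otherwise, which matches $\expect(\eta(Y))$ for $Y$ uniform on $R^{\perp}$. The Weyl criterion then gives convergence in law of $Y_{p^n}$ to a random function uniformly distributed on the subgroup of $C(Z_g;\Ss^1)$ orthogonal to $R$, as claimed. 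There is no genuine obstacle here, the computation being a direct multivariable generalization of the one-variable case; the only point requiring a word of care is the last one, namely that the threshold on $|p|^n$ beyond which vanishing modulo $p^n$ detects vanishing in $K_g$ depends on $\alpha$ — but since the Weyl criterion is verified one character at a time, this dependence is harmless.
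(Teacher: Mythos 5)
Your proof is correct and follows exactly the route the paper intends: the statement is presented there as a ``very simple adaptation of the proof of Proposition~\ref{pr-1}, left to the reader,'' and your write-up is precisely that adaptation (Weyl criterion, factorization of the Weyl sum over the $k$ variables, orthogonality, and the observation that for fixed $\alpha$ the nonvanishing of $\gamma_i(\alpha)$ in $K_g$ is detected modulo $p^n$ once $|p|^n$ is large). The extra care you take with negative exponents $m_i$ (using that the roots are units modulo $p^n$) is exactly the point of the hypotheses $0\notin Z_g$ and $p$ dividing no root.
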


As corollaries, we have equidistribution for the sums
$$
\sum_{x\in Z_g(\Ff_q)}e\Bigl(\frac{av(x)}{q}\Bigr)
$$
as $a$ varies in $\Ff_q$ for $q$ totally split in $K_g$ and
$$
\sum_{x\in Z_g(\Ff_q)}e\Bigl(\frac{a_1x^{m_1}+\cdots +a_kx^{m_k}}{q}\Bigr),
$$
as $a_1$, \ldots, $a_k$ vary independently and uniformly in $\Ff_q$ for
$q$ totally split in $K_g$.

\begin{example}
  Consider the case of $g=X^d-1$ and the sums
  \begin{equation}\label{eq-1}
    \sum_{x\in \mmu_d(\Ff_q)}e\Bigl(\frac{a(x+\bar{x})}{q}\Bigr)
  \end{equation}
  and
  \begin{equation}\label{eq-2}
    \sum_{x\in \mmu_d(\Ff_q)}e\Bigl(\frac{ax+b\bar{x}}{q}\Bigr),
  \end{equation}
  as $a$ and $b$ vary in $\Ff_q$ for $q$ totally split in $K_g$.  Both
  satisfy equidistribution,
  but in general have different limiting measures.  For~(\ref{eq-1}),
  we need to determine the functions $\alpha$ satisfying the relation
  $$
  \sum_{x\in\mmu_d}\alpha(x)(x+x^{-1})=0,
  $$
  and for~(\ref{eq-2}), we need to solve
  $$
  \sum_{x\in\mmu_d}\alpha(x)x=\sum_{x\in\mmu_d}\alpha(x)x^{-1}=0.
  $$
  \par
  This last case boils down to the same relations as in
  Section~\ref{sec-examples}, Example~1, since the second sum above is
  the complex-conjugate of the first.

  For~(\ref{eq-1}), on the other hand,
  the relation is equivalent to
  $$
  \sum_{x\in\mmu_d}(\alpha(x)+\alpha(x^{-1}))x=0,
  $$
  which means that $\beta\colon x\mapsto \alpha(x)+\alpha(x^{-1})$
  belongs to the group of additive relations of $X^d -1$.

  We now assume that $d=\ell$ is an odd prime number. Then, by the
  previous examples, the map $\beta$ must be constant. Let then~$\xi$ be
  a non-trivial $\ell$-th root of unity. It is then fairly easy to check
  that the module $R_{X^{\ell}-1,X+X^{-1}}$ is generated by the constant
  function $\alpha_0 = 1$ and the functions $\alpha_j$ for
  $1\leq j\leq (\ell-1)/2$ such that
  $$
  \alpha_j(\xi^k)=\begin{cases}
    0 &\text{ if } k\notin \{j,\ell-j\}\\
    1&\text{ if } k=j\\
    -1&\text{ if } k=\ell-j.
  \end{cases}
  $$

  (It is clear that $\alpha_0$, \ldots, $\alpha_{(\ell-1)/2}$ provide
  relations; conversely, if~$\beta$ is constant then we check that
  $$
  \alpha=\alpha(1)\alpha_0+\sum_{j=1}^{(\ell-1)/2}
  (\alpha(\xi^j)-\alpha(1))\alpha_j,
  $$
  so that these functions generate the group of relations.)
  
  In particular, the module of relations has rank~$(\ell+1)/2$, and the
  limit $W$, in this case, is uniform on the subgroup
  $H_{X^{\ell}-1,X+X^{-1}}$ characterized by
  $f\in H_{X^{\ell}-1,X+X^{-1}}$ if and only if
  $$
  \prod_{j=0}^{\ell-1}f(\xi^j)=1,
  $$
  (corresponding to~$\alpha_0$) and
  $$
  f(\xi^j)=f(\xi^{\ell-j})
  $$
  for $1\leq j\leq (\ell-1)/2$ (corresponding to~$\alpha_j$).

  Consider for instance the case $\ell = 3$. The sums \eqref{eq-2} will
  become equidistributed with respect to the measure on $\Cc$ which is
  the pushforward measure of the uniform measure on $\Ss^1 \times \Ss^1$
  by $ (y_1, y_2) \mapsto y_1 + y_2 + 1/(y_1y_2)$.  This is illustrated
  in Figure \ref{comparaison} (B), since the image of the above map is
  the closed region delimited by a $3$-cusp hypocycloid.
  
  On the other hand, the sums \eqref{eq-1} become equidistributed in
  this case with respect to the image of the Haar measure on~$\Ss^1$ by
  the map $y\mapsto 2y+1/y^2$.  Since the image of this map is precisely
  the $3$-cusp hypocycloid, this explains the picture obtained in Figure
  \ref{comparaison} (A).
  
  \begin{figure}
    \centering
    \begin{subfigure}{5cm}
      \includegraphics[width=\textwidth]{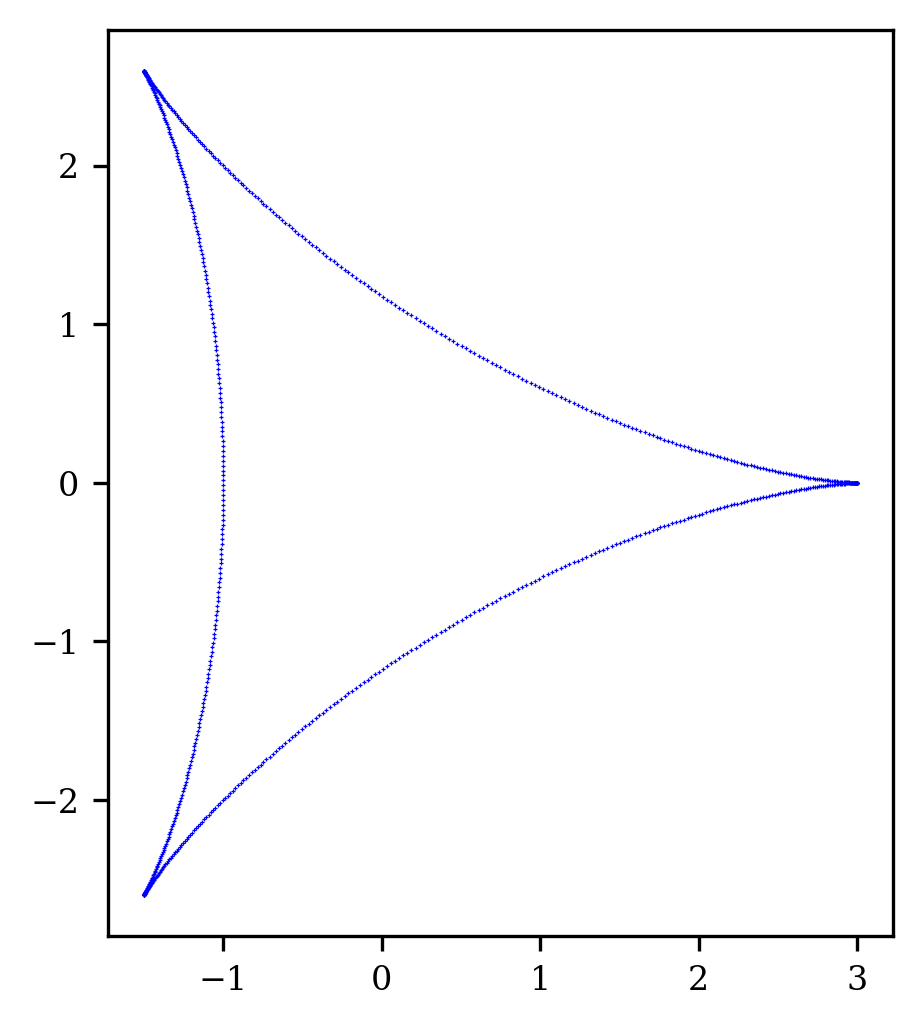}
      \caption{The sums of type \eqref{eq-1} for $d=3$, $q = 811$, and $a$ varying in $\Ff_q$.}
    \end{subfigure}
    \hspace{0.5cm}
    \begin{subfigure}{5cm}
      \includegraphics[width=\textwidth]{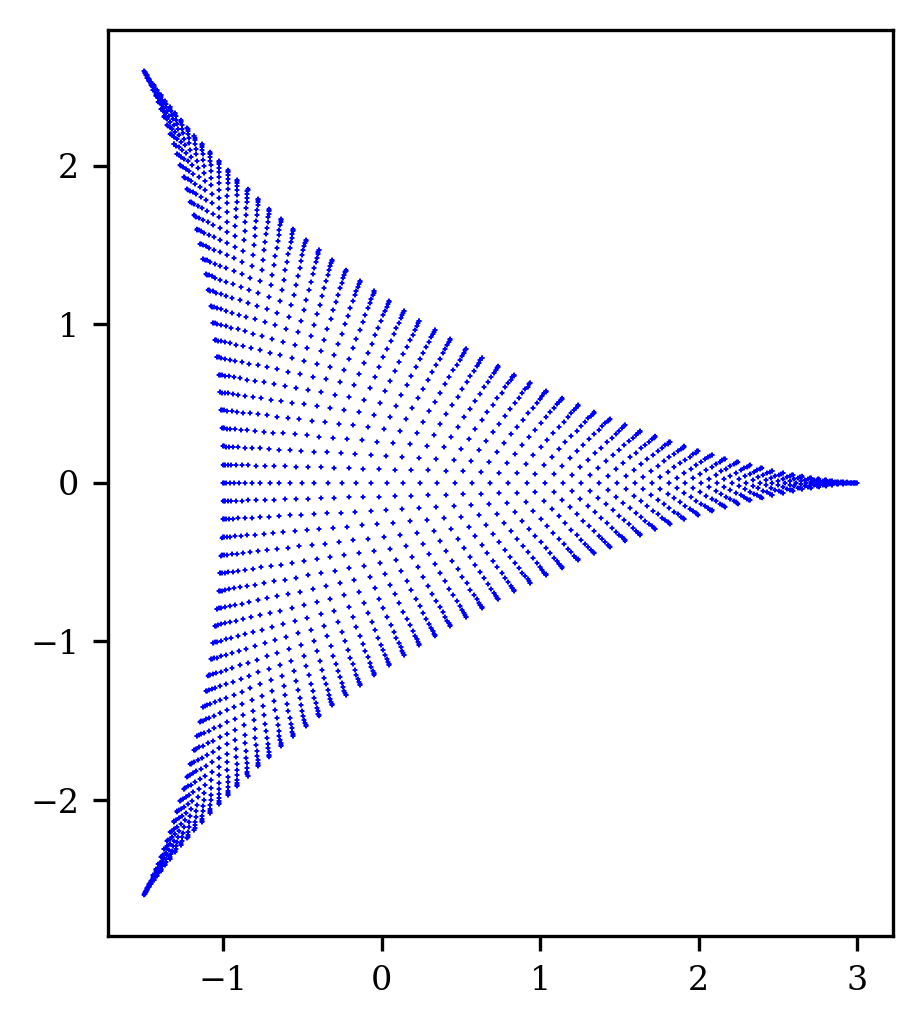}
      \caption{The sums of type \eqref{eq-2} for $d=3$, $q=109$, and $a$ and $b$ varying in $\Ff_q$.}
    \end{subfigure}
    \caption{Comparison between the regions of equidistribution for sums of type \eqref{eq-1} and sums of type \eqref{eq-2}.}
    \label{comparaison}
  \end{figure}
  
  In the case $\ell = 5$, the sums \eqref{eq-2} are equidistributed with
  respect to the measure on $\Cc$ which is the pushforward measure of
  the uniform measure on $(\Ss^1)^4$ by
  $ (y_1, \ldots,y_4) \mapsto y_1 +\cdots + y_4 + 1/(y_1\cdots y_4)$.
  The sums \eqref{eq-1} are equidistributed in this case with respect to
  the image of the Haar measure on~$(\Ss^1)^2$ by the map
  $(y_1,y_2)\mapsto 2y_1+2y_2+1/(y_1y_2)^2$.

  \begin{figure}
    \centering
    \begin{subfigure}{5cm}
      \includegraphics[width=\textwidth]{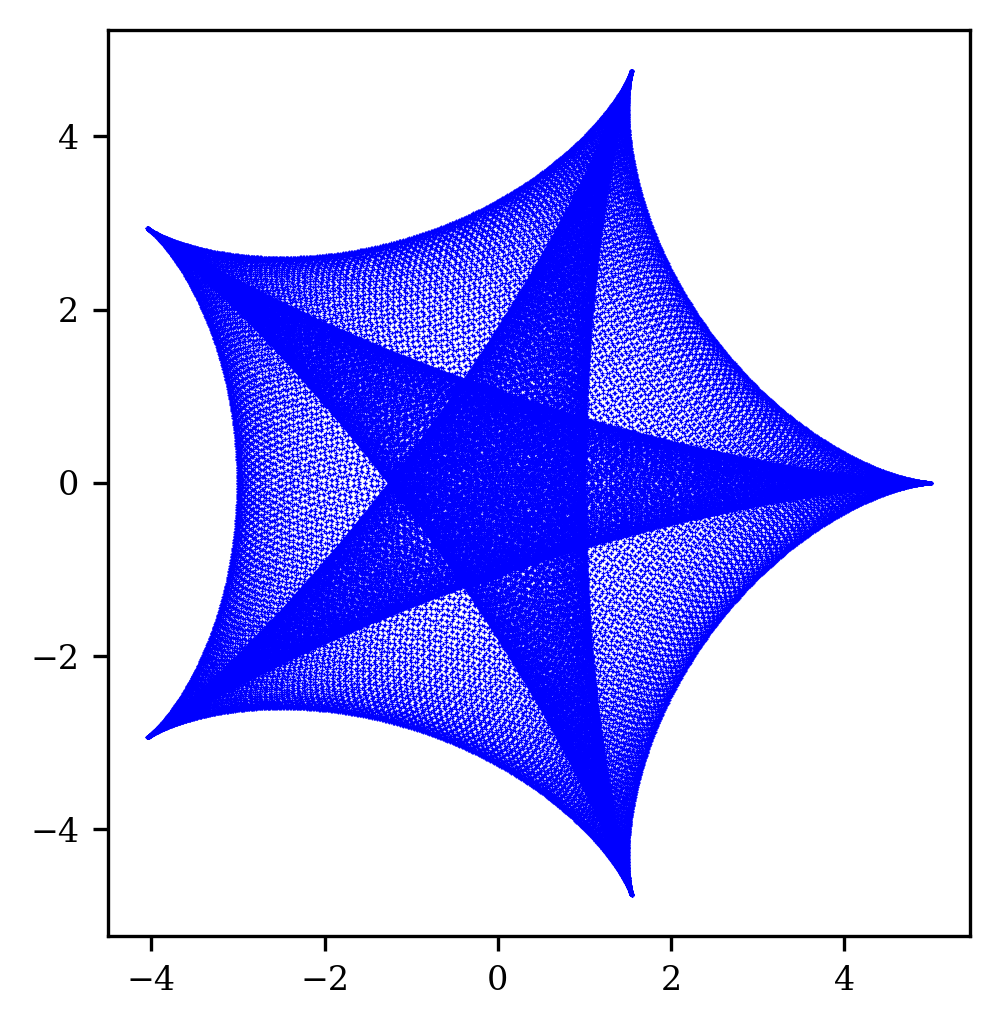}
      \caption{The sums of type \eqref{eq-1} for $d=5$, $q = 96331$, and $a$ varying in $\Ff_q$.}
    \end{subfigure}
    \hspace{0.5cm}
    \begin{subfigure}{5cm}
      \includegraphics[width=\textwidth]{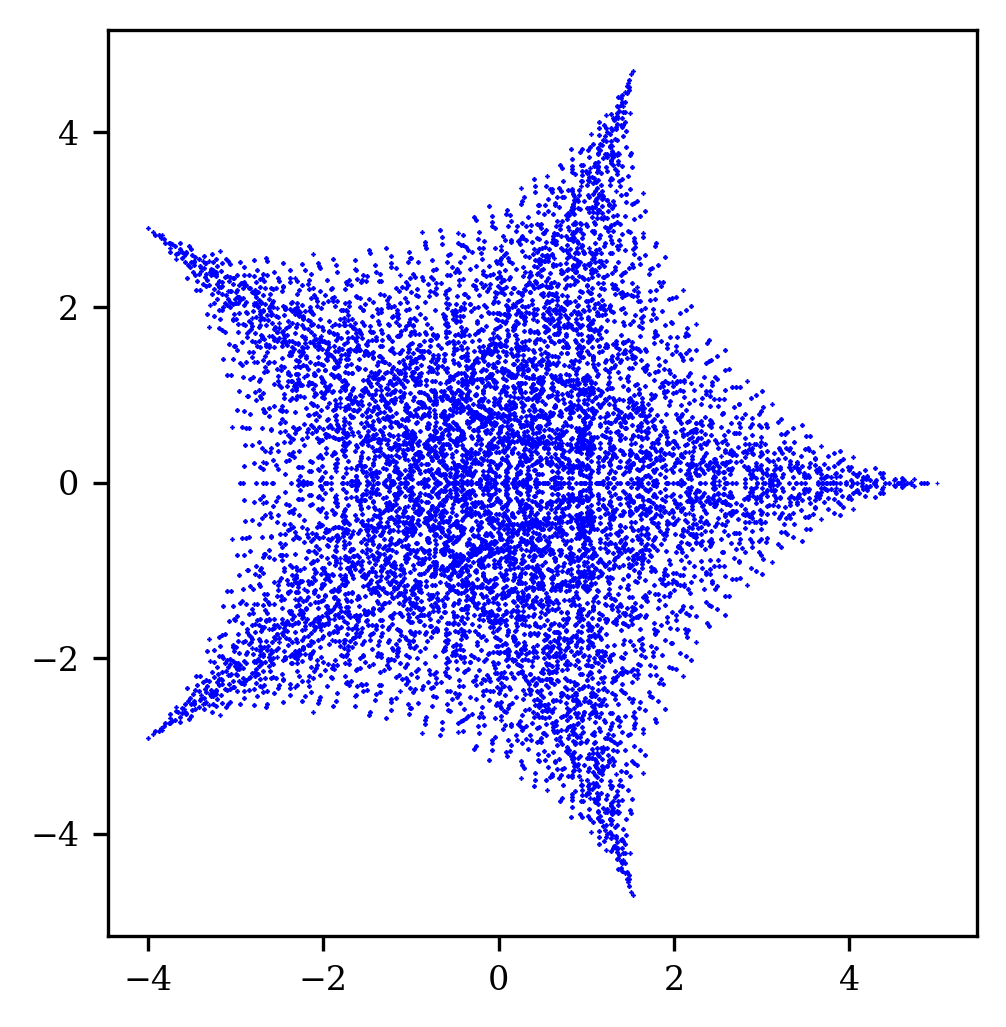}
      \caption{The sums of type \eqref{eq-2} for $d=5$, $q=311$, and $a$ and $b$ varying in $\Ff_q$.}
    \end{subfigure}
    \caption{Comparison between the equidistribution results for sums of type \eqref{eq-1} and sums of type \eqref{eq-2} for $d = 5$.}
    \label{comparaison 2}
  \end{figure}
  
\end{example}

\section{A multiplicative analogue} \label{section multiplicative}

In~\cite{relations}, the group of multiplicative relations between roots
of a polynomial also appears naturally. Is it also relevant for the type
of questions under consideration here?  It turns out that it is, if we
change the probability space, and look at the distribution of sums
$$
\sum_{x\in Z_g}\chi(v(x))
$$
where $\chi$ is a varying multiplicative character of~$\Ff_q$ and $v$ is
a fixed polynomial.

More precisely, we continue with the notation from the previous
section, but assume moreover that $v(x)\not=0$ for $x\in Z_g$ (for
instance, $0\notin Z_g$ if $v=X$). For $p\in\mathcal{S}_g$, we now
consider the probability space $X_p$ of multiplicative characters
$\chi\colon (\Oo_g/p)^{\times}\to \Ss^1$, with the uniform probability
measure (we consider only primes instead of prime powers for
simplicity here).  The random variables are now $\widetilde{U}_p$,
taking values in the group $C(Z_g;\Ss^1)$, and defined by
$$
\widetilde{U}_p(\chi)(x)=\chi(v(\varpi(x))).
$$

\begin{proposition}
  The random variables $\widetilde{U}_p$ converge in law as
  $|p|\to+\infty$ to the random function
  $\widetilde{U}\colon Z_g\to \Ss^1$ such that~$\widetilde{U}$ is
  uniformly distributed on the subgroup
  $\widetilde{H}_g\subset C(Z_g;\Ss^1)$ which is orthogonal to the
  abelian group $\widetilde{R}_g\subset C(Z_g;\Zz)$ of multiplicative
  relations between values of $v$ on~$Z_g$, namely we have
  $$
  \widetilde{R}_{g,v}=\{\alpha\colon Z_g\to\Zz\,\mid\, \prod_{x\in
    Z_g}v(x)^{\alpha(x)}=1\},
  $$
  and
  $$
  \widetilde{H}_{g,v}=\{f\in C(Z_g;\Ss^1)\,\mid\, \text{ for all
    $\alpha\in \widetilde{R}_d$, we have } \prod_{x\in
    Z_g}f(x)^{\alpha(x)}=1\}.
  $$
  \par
  In particular, as $q\to+\infty$ among primes totally split in $K_g$,
  the sums
  $$
  \sum_{x\in Z_g}\chi(v(x))
  $$
  converge in law to the image by the linear form $\sigma$ of the Haar
  probability measure on~$\widetilde{H}_{g,v}$.
\end{proposition}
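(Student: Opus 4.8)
The plan is to mimic the proof of Proposition~\ref{pr-1} almost verbatim, with two changes: the additive characters of $\Oo_g/p^n$ are replaced by the multiplicative characters of $(\Oo_g/p)^{\times}$, and the homomorphism $\gamma$ is replaced by the multiplicative analogue $\alpha\mapsto \prod_{x\in Z_g}v(x)^{\alpha(x)}$. First I would dispose of a normalization point: since $v(x)\neq 0$ in $K_g$ for every $x\in Z_g$, only finitely many primes $p$ (depending only on $g$ and $v$) divide some $v(x)$; removing these from $\mathcal{S}_g$ does not affect the limit as $|p|\to+\infty$, and afterwards $v(\varpi(x))\in(\Oo_g/p)^{\times}$ for all $x$, so that $\widetilde U_p$ is a genuine $C(Z_g;\Ss^1)$-valued random variable and $\varpi=\varpi_p$ is multiplicative on the finitely generated subgroup of $K_g^{\times}$ spanned by the $v(x)$.

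Next I would apply the generalized Weyl Criterion on the compact abelian group $C(Z_g;\Ss^1)$: it suffices to prove $\expect(\eta(\widetilde U_p))\to\expect(\eta(\widetilde U))$ as $|p|\to+\infty$ for each character $\eta$, which is given by a unique $\alpha\in C(Z_g;\Zz)$ through $\eta(f)=\prod_{x}f(x)^{\alpha(x)}$. By Pontryagin duality (the subgroup $\widetilde R_{g,v}$ of the discrete group $C(Z_g;\Zz)$ being automatically closed), $\eta$ is trivial on $\widetilde H_{g,v}=(\widetilde R_{g,v})^{\perp}$ exactly when $\alpha\in\widetilde R_{g,v}$, so the right-hand side is $1$ if $\alpha\in\widetilde R_{g,v}$ and $0$ otherwise. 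For the left-hand side, set $\beta=\beta(\alpha)=\prod_{x\in Z_g}v(x)^{\alpha(x)}\in K_g^{\times}$; then
$$
\expect(\eta(\widetilde U_p))=\frac{1}{|(\Oo_g/p)^{\times}|}\sum_{\chi\in X_p}\chi\Bigl(\prod_{x\in Z_g}v(\varpi(x))^{\alpha(x)}\Bigr)=\frac{1}{|(\Oo_g/p)^{\times}|}\sum_{\chi\in X_p}\chi(\varpi(\beta)),
$$
and by orthogonality of the characters of the finite abelian group $(\Oo_g/p)^{\times}$ this equals $1$ if $\varpi(\beta)=1$ in $(\Oo_g/p)^{\times}$ and $0$ otherwise.

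It then remains to see that, $\alpha$ (hence $\beta$) being fixed, the condition $\varpi(\beta)=1$ stabilizes as $|p|\to+\infty$ to the condition $\beta=1$ in $K_g$: if $\beta=1$ this is clear for all retained $p$, and if $\beta\neq 1$ one writes $\beta-1=c/d$ with $c,d\in\Oo_g\setminus\{0\}$ and notes that $\varpi(\beta)\neq 1$ for all $p$ not dividing $c$, which excludes only finitely many further primes (harmless, since we pass to a limit). Hence $\lim_{|p|\to+\infty}\expect(\eta(\widetilde U_p))=\expect(\eta(\widetilde U))$ for every $\eta$, which gives the convergence in law of $\widetilde U_p$ to $\widetilde U$. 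The ``in particular'' clause then follows as in the proof of Corollary~\ref{cor-additive-characters}: for $q$ totally split in $K_g$ (outside the finite bad set) and $p\in\mathcal{S}_g$ above $q$, the map $\varpi_p$ identifies $\Oo_g/p$ with $\Ff_q$, hence $Z_g(\Ff_q)$ with $Z_g$ and $\widehat{\Ff_q^{\times}}$ with $X_p$, so that $\sum_{x\in Z_g}\chi(v(x))=\sigma(\widetilde U_p(\chi))$; composing with the continuous linear form $\sigma\colon C(Z_g;\Ss^1)\to\Cc$ yields convergence in law to $\sigma(\widetilde U)$, whose law is the image of the Haar measure on $\widetilde H_{g,v}$. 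The only place requiring a little care — rather than a genuine obstacle — is precisely this bookkeeping: checking that $\varpi_p$ is multiplicative on the $p$-units in play, and that the finitely many primes dividing the numerators of $\beta$ or of $\beta-1$ may be safely discarded once $\eta$ has been fixed.
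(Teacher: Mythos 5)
Your proposal is correct and follows essentially the same route as the paper: apply the Weyl criterion on $C(Z_g;\Ss^1)$, rewrite $\expect(\eta(\widetilde U_p))$ as an average of $\chi(\varpi(\prod_x v(x)^{\alpha(x)}))$ over multiplicative characters, use orthogonality, and observe that the condition $\varpi(\beta)=1$ stabilizes to $\beta=1$ for $|p|$ large. The paper's proof is just a terser ``mutatis mutandis'' version of the same argument; your extra bookkeeping (discarding the finitely many primes dividing some $v(x)$ or the numerator of $\beta-1$) is exactly the content hidden in its phrase ``for the same reasons as before.''
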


\begin{proof}
  This is the same as Proposition~\ref{pr-1}, mutatis mutandis, with now
  $$
  \expect(\eta(\widetilde{U}_p))= \frac{1}{|p|-1} \sum_{\chi\in X_p}
  \prod_{x\in Z_g}\chi(v(\varpi(x)))^{\alpha(x)}
  $$
  for a character $\eta$ of $C(Z_g;\Ss^1)$ determined by the function
  $\alpha$. This is
  $$
  \expect(\eta(\widetilde{U}_p))= \frac{1}{|p|-1} \sum_{\chi\in X_p}
  \chi\Bigl(\varpi\Bigl(\prod_{x\in Z_g}v(x)^{\alpha(x)}\Bigr)\Bigr)
  $$
  and for the same reasons as before, converges to~$1$ or~$0$, depending
  on whether
  $$
  \prod_{x\in Z_g}v(x)^{\alpha(x)}
  $$
  is equal to~$1$ or not. 
\end{proof}

\begin{example}
  (1) Here also there are some interesting examples in~\cite{relations}
  and~\cite{repr} if we take $v=X$ (so that $R_{g,v}$ corresponds to
  multiplicative relations between roots of~$g$). In particular, we
  could take a polynomial $g$ with Galois group the Weyl group of
  $\mathbf{E}_8$, which is of degree~$248$ but has all roots obtained
  multiplicatively from $8$ of them (see~\cite{e8} for examples).
  \par
  (2) For $v=X$ again, the case of $g=X^d-1$ is quite
  degenerate. Indeed, for $q\equiv 1\mods{d}$ and a multiplicative
  character $\chi$ of~$\Ff_q$, the sum
  $$
  \sum_{x\in\mmu_d(\Ff_q)}\chi(x)
  $$
  is either~$d$ or~$0$, depending on whether the character~$\chi$ is
  trivial on the $d$-th roots of unity or not. The former means that
  $\chi^{(|p|-1)/d}=1$, and there are therefore $(|p|-1)/d$ such
  characters. Hence the sum is equal to $d$ with probability $1/d$, and
  to~$0$ with probability $1-1/d$.
  \par
  (3) If we consider the class polynomial for CM curves (as in
  Section~\ref{sec-examples}, Example~4), we are led to consider
  potential multiplicative relations between $j$-invariants. This is
  apparently more challenging than the additive case, and we do not have
  a precise answer at the moment (see, e.g., the papers of Bilu, Luca
  and Pizarro-Madariaga~\cite{b-l-p} and Fowler~\cite{fowler} for
  partial results).
\end{example}

\section{Higher rank trace functions}\label{sec-proofs2}

We now elaborate on the setting of Section~\ref{sec-proofs1} to involve
more general trace functions. Thus the goal is to study the distribution
of
$$
\sum_{x\in Z_g(\Oo_g/p)}t_p(ax),\quad\quad\text{ or }
\quad\quad
\sum_{x\in Z_g(\Oo_g/p)}t_p(a+x),
$$
(or other similar expressions) when $t_p$ is, for
each~$p\in\mathcal{S}_g$, a trace function over the finite field
$\Oo_g/p$. The cases of Section~\ref{sec-proofs1} correspond to
$t_p(x)=e(x/|p|)$ or $t_p(x)=e(v(x)/|p|)$, i.e., to the trace functions of
Artin--Schreier sheaves.

We thus assume that for each $p\in\mathcal{S}_g$, we are given a
middle-extension sheaf $\mcF_p$ on the affine line over $\Oo_g/p$. We
assume that these sheaves are pure of weight~$0$, and have the same
rank~$r$, and moreover have bounded conductor in the sense of Fouvry,
Kowalski and Michel~\cite{fkm1, sop}.

We denote by~$\Un_r(\Cc)^{\sharp}$ the space of conjugacy classes in the
unitary group~$\Un_r(\Cc)$.  For any~$x\in\Oo_g/p$ such that~$\mcF_p$ is
lisse at~$x$, the action of the geometric Frobenius automorphism at~$x$
on the stalk of~$\mcF_p$ at~$x$ gives a unique conjugacy class
$\Theta_p(x)\in\Un_r(\Cc)^{\sharp}$. We denote
\begin{align*}
  A_p&=\{a\in (\Oo_g/p)^{\times} \,\mid\, \text{ for all } x\in
  Z_g(\Oo_g/p),\text{ $\mcF_p$ is lisse at $ax$}\},
  \\
  B_p&=\{a\in\Oo_g/p\,\mid\, \text{ for all } x\in Z_g(\Oo_g/p),\text{
    $\mcF_p$ is lisse at $a+x$}\}.
\end{align*}

Note that $|A_p|\gg |p|$ if $0\notin Z_g$ and $|B_p|\gg |p|$ in all
cases.

We can define random functions $U_p$ and $V_p$ on $A_p$ and $B_p$,
respectively (with the uniform probability measure), with values in the
space $C(Z_g;\Un_r(\Cc)^{\sharp})$ by
$$
U_p(a)(x)=\Theta_p(ax),\quad\quad
V_p(a)(x)=\Theta_p(a+x).
$$
\par
Since the trace function $t_p$ of $\mcF_p$ satisfies
$$
t_p(x)=\Tr(\Theta_p(x))
$$
when $\mcF_p$ is lisse at~$x$, we see that if one can prove that $(U_p)$
or $(V_p)$ have a limit, then the corresponding sums
\begin{equation}\label{eq-sums-higher-rank}
  \sum_{x\in Z_g(\Oo_g/p)}t_p(ax),\quad\quad\text{ and/or }
  \quad\quad\sum_{x\in Z_g(\Oo_g/p)}t_p(a+x),
\end{equation}
for $a\in A_p$ (resp. $B_p$) will become equidistributed according to
the image of this limit distribution by the map
$$
f\mapsto \sum_{x\in Z_g}\Tr(f(x))
$$
for $f\colon Z_g\to \Un_r(\Cc)^{\sharp}$.

\begin{remark}
  It happens frequently that $t_p(y)\ll p^{-1/2}$ if $\mcF_p$ is not
  lisse at~$y$, where the implied constant depends only on the conductor
  of~$\mcF_p$. In such a csae, the equidistribution for the
  sums~(\ref{eq-sums-higher-rank}) holds when~$a$ is taken in all of
  $(\Oo_g/p)^{\times}$ or $\Oo_g/p$, since the remaining value of~$a$
  have negligible contributions.
\end{remark}

We obtain a large supply of examples from known results on estimates of
``sums of products'' of trace functions (see~\cite{sop}). Although the
terminology might not be familiar to all readers, examples after the
proof will provide concrete illustrations.

\begin{proposition}
  Assume that $\mcF_p$ is bountiful in the sense of~\cite{sop} for all
  $p$ in $\mathcal{S}_g$. 
  \par
  \emph{(1)} If $\mcF_p$ is of $\Sp_{r}$-type for all $p$, then $(U_p)$
  and $(V_p)$ converge in law as $|p|\to+\infty$, with limit uniform on
  $C(Z_g;\USp_r(\Cc)^{\sharp})$.
  \par
  \emph{(2)} If $\mcF_p$ is of $\SL_{r}$-type for all $p$, and the
  special involution, if it exists, is not $y\mapsto -y$, then $(U_p)$
  and $(V_p)$ converge in law as $|p|\to+\infty$, with limit uniform on
  $C(Z_g;\SU_r(\Cc)^{\sharp})$.
  \par
  \emph{(3)} If $\mcF_p$ is of $\SL_{r}$-type for all $p$ with special
  involution $y\mapsto -y$, then $(V_p)$ converge in law as
  $|p|\to+\infty$ with limit uniform on $C(Z_g;\SU_r(\Cc)^{\sharp})$, and
  $(U_p)$ converges in law with limit uniform on
  $$
  \{f\colon Z_g \to \SU_r(\Cc)\,\mid\, f(x)=\overline{f(y)}\text{ if }
  x=-y \}.
  $$
  \par
  In all three cases, we assume that $0\notin Z_g$ in the case
  of~$(U_p)$.
\end{proposition}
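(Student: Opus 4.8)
The plan is to establish all three parts at once via the generalized Weyl criterion on the compact space $C(Z_g;\Un_r(\Cc)^{\sharp})$, reducing the test characters of this space to sums of products of trace functions and then invoking the results of~\cite{sop}. By Peter--Weyl together with Stone--Weierstrass, the functions $f\mapsto \prod_{x\in Z_g}\chi_{\lambda_x}(f(x))$, with each $\lambda_x$ ranging over the irreducible representations of $\Un_r(\Cc)$, span a uniformly dense subspace of the continuous functions on $C(Z_g;\Un_r(\Cc)^{\sharp})$; hence convergence in law of $(U_p)$ to a measure $\mu$ is equivalent to
$$
\expect\Bigl(\prod_{x\in Z_g}\chi_{\lambda_x}(U_p(a)(x))\Bigr)=\frac{1}{|A_p|}\sum_{a\in A_p}\ \prod_{x\in Z_g}\chi_{\lambda_x}(\Theta_p(ax))\ \longrightarrow\ \int \prod_{x\in Z_g}\chi_{\lambda_x}\,d\mu
$$
for every finite family $(\lambda_x)_{x\in Z_g}$ (those $x$ with $\lambda_x$ trivial contributing a factor $1$), and likewise for $(V_p)$ with $ax$ replaced by $a+x$.

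The first step is to recognize $\chi_{\lambda_x}\circ\Theta_p$ as a trace function: if $\rho_p$ is the representation attached to $\mcF_p$, then $y\mapsto\chi_{\lambda_x}(\Theta_p(y))$ is the trace function of the middle-extension sheaf $[\lambda_x]\mcF_p$ obtained by composing $\rho_p$ with $\lambda_x$, whose conductor is bounded solely in terms of $r$, $\lambda_x$ and $\cond(\mcF_p)$, hence uniformly in $p$. Consequently $\prod_{x}\chi_{\lambda_x}(\Theta_p(ax))=t_{\mcG_p}(a)$, where
$$
\mcG_p=\bigotimes_{x\in Z_g}[\times x]^{*}[\lambda_x]\mcF_p
$$
is a sheaf on $\Gm$ over $\Oo_g/p$ (here one uses $0\notin Z_g$, so that $[\times x]$ is an automorphism of $\Gm$); for $(V_p)$ one replaces $[\times x]$ by the translation $a\mapsto a+x$ and works on $\Aa^1$. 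Extending the average over $A_p$ to all of $(\Oo_g/p)^{\times}$ changes it by $O_r(1)$ terms (bounded-conductor trace functions are bounded, and $Z_g$ is fixed), while $|A_p|=|p|+O_r(1)$; so the Grothendieck--Lefschetz trace formula, Deligne's Riemann Hypothesis, and the Euler--Poincaré formula (which bounds $\dim H^1_c$ by $\cond(\mcG_p)=O_r(1)$) give
$$
\expect\Bigl(\prod_{x\in Z_g}\chi_{\lambda_x}(U_p(a)(x))\Bigr)=\Tr\bigl(\frob\mid H^2_c(\Gm\times\bar{\Ff},\mcG_p)\bigr)/|p|+O_r(|p|^{-1/2}),
$$
where $H^2_c$ is the Tate-twisted geometric coinvariants $(\mcG_p)_{\pi_1^{\mathrm{geom}}}(-1)$. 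The exact role of the hypothesis that $\mcF_p$ is \emph{bountiful} is to guarantee (cf.~\cite{sop}) that this coinvariant space is \emph{arithmetically trivial}, i.e.\ that Frobenius acts on it by $1$, so that the main term equals the non-negative integer $\dim(\mcG_p)_{\pi_1^{\mathrm{geom}}}$.

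It remains to compute $\dim(\mcG_p)_{\pi_1^{\mathrm{geom}}}$, which depends only on the geometric monodromy group of $\bigoplus_{x\in Z_g}[\times x]^{*}\mcF_p$ inside $G^{Z_g}$, where $G$ is the geometric monodromy group of $\mcF_p$ (namely $\Sp_r$ or $\SL_r$, according to the type). By a Goursat--Kolchin--Ribet argument, exactly as in~\cite{sop}, this group is the full product $\prod_{x}G$ \emph{unless} two of the pullbacks $[\times x]^{*}\mcF_p$ and $[\times x']^{*}\mcF_p$ become geometrically isomorphic up to a Tate twist --- equivalently, unless the dilation $[\times(x'/x)]$ lies in the finite symmetry group of $\mcF_p$, which by the structure theory of bountiful sheaves is either trivial or generated by the special involution $\sigma_{\mcF_p}$. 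Since the only non-trivial dilation that is an involution of $\Pp^1$ is $y\mapsto -y$, and no non-trivial translation is an involution, this yields: for $(V_p)$ no coincidence ever occurs, so the monodromy is $\prod_x G$; the invariants of $\bigotimes_x\lambda_x$ under $\prod_x G$ then factor as $\prod_x\langle\chi_{\lambda_x}|_G,\mathbf 1\rangle$, matching the integral of $\prod_x\chi_{\lambda_x}$ against the product of copies of the Haar measure of $G$ --- that is, the limit is uniform on $C(Z_g;\USp_r(\Cc)^{\sharp})$ in case~(1) and on $C(Z_g;\SU_r(\Cc)^{\sharp})$ in cases~(2) and~(3). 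For $(U_p)$ the same conclusion holds as long as $\sigma_{\mcF_p}$, when it exists, is not $y\mapsto -y$, which covers case~(1) (a $\Sp_r$-type bountiful sheaf never has this symmetry, by the classification of~\cite{sop}) and is the standing hypothesis of case~(2). Finally, in case~(3), $\sigma_{\mcF_p}=(y\mapsto -y)$ (which by the classification forces $\SL_r$-type and $[\times(-1)]^{*}\mcF_p\cong\widetilde{\mcF_p}$), hence $[\times(-x)]^{*}\mcF_p\cong\widetilde{[\times x]^{*}\mcF_p}$ for every $x\in Z_g$, and the monodromy of the direct sum is the subgroup of $\prod_{x}\SL_r$ cut out by the relations $g_{-x}=\overline{g_x}$ for those $x\in Z_g$ with $-x\in Z_g$ (and unconstrained in the remaining coordinates); its Haar measure is precisely the measure uniform on $\{f\colon Z_g\to\SU_r(\Cc)\mid f(x)=\overline{f(y)}\text{ if }x=-y\}$. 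The main obstacle is this last step: extracting from~\cite{sop} the exact description of the symmetry group of a bountiful sheaf and of which parameter configurations force a coincidence, verifying that among the finitely many ratios $x/x'$ (and differences $x-x'$) coming from $Z_g$ the only forced coincidences are the $x\leftrightarrow -x$ ones of case~(3), and then running the Goursat argument carefully enough for the resulting ``partially diagonal'' direct-sum monodromy to yield both the dimension of invariants and its arithmetic triviality. The remaining ingredients --- the reduction to characters, the identification of $\mcG_p$, and the cohomological estimates --- are routine once $\cond(\mcG_p)$ is seen to be bounded uniformly in $p$.
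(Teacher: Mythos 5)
Your proposal follows essentially the same route as the paper: the generalized Weyl criterion with test functions given by products of irreducible characters, identification of the Weyl sums as sums of traces of Frobenius on a tensor product sheaf $\mcG_p$, Deligne's Riemann Hypothesis, and the bountiful hypothesis to force the geometric monodromy of $\mcG_p$ to be the full product except for the coincidences created by the special involution $y\mapsto -y$ in case (3). Your write-up is correct and in fact spells out several steps the paper leaves implicit (the uniform conductor bound, the Euler--Poincar\'e estimate, and the Goursat analysis of the partially diagonal monodromy in case (3), which the paper handles instead by observing directly that $\Tr(\Theta_p(-ax))=\overline{\Tr(\Theta_p(ax))}$).
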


\begin{proof}
  We argue with $U_p$, as the case of $V_p$ is very similar. By
  definition, the random variables $U_p$ take values in
  $C(Z_g;\USp_r(\Cc)^{\sharp})$. Applying the Weyl Criterion, it
  suffices to show that if $(\pi_x)_{x\in Z_g}$ is a family of
  irreducible representations of $\USp_r(\Cc)$, not all trivial, with
  characters $\chi_x=\Tr(\pi_x)$, we have
  $$
  \lim_{|p|\to +\infty} \frac{1}{|p|}\sum_{a\in \Oo_g/p}\  \prod_{x\in
    Z_g(\Oo_g/p)}\chi_x(\Theta_p(ax))=0.
  $$
  \par
  The sum is, up to negligible amount coming from points
  where~$\mcF_p$ is not lisse, the sum of the traces of Frobenius on
  the sheaf
  $$
  \mcG=\bigotimes_{x\in Z_g(\Oo_g/p)} \pi_x([a\mapsto ax]^*\mcF_p),
  $$
  and by Riemann Hypothesis over finite fields of Deligne, we obtain
  $$
  \frac{1}{|p|}\sum_{a\in \Oo_g/p}\  \prod_{x\in
    Z_g(\Oo_g/p)}\chi_x(\Theta_p(ax))\ll |p|^{-1/2}
  $$
  as soon as the geometric monodromy group of this sheaf has no trivial
  subrepresentation in its standard representation. This is true because
  the bountiful property of $\mcF_p$ ensures that the geometric
  monodromy group of $\mcG$ is the product group $\prod_x \Sp_r$.
  \par
  The argument is similar for (2); for (3), we have to take into account
  the fact that the assumption implies that $[a\mapsto -a]^*\mcF_p$ is
  isomorphic to the dual of~$\mcF_p$, so that
  $\Tr(\Theta_p(-ax))=\overline{\Tr(\Theta_p(ax))}$ for all $x\in Z_g$.
\end{proof}

\begin{example}
  We illustrate here all three cases with examples.
  \par
  (1) The classical Kloosterman sums $\Kl_2$ (as in part (2) of
  Theorem~\ref{th-1}) are trace functions of a bountiful sheaf of
  rank~$r=2$ of $\Sp_2$-type which is lisse except at~$0$
  and~$\infty$. Thus the first case of the proposition applies, and in
  particular this establishes the second part of Theorem~\ref{th-1}, in
  view of the fact that the trace of a uniform random matrix
  in~$\SU_2(\Cc)$ is Sato--Tate distributed.
  \par
  Similarly, for even-rank hyper-Kloosterman sums (for which $\mcF_p$ is
  also lisse except at~$0$ and~$\infty$), we obtain the $\USp_r$ case
  (see~\cite[\S\,3.2]{sop}).
  \par
  (2) If $r$ is odd, then the hyper-Kloosterman sum $\Kl_r(a;p)$ arise
  as trace functions of a bountiful sheaf of $\SL_r$-type with special
  involution $y\mapsto -y$ (see~\cite[\S\,3.3]{sop}), which is lisse
  except at~$0$ and~$\infty$. So the third case of the proposition
  applies here. In particular, if the polynomial $g$ is even or odd (so
  that $Z_g=-Z_g$), the support of the limit of $U_p$ is only
  ``half-dimensional''.
  \par
  (3) Examples of trace functions coming from bountiful sheaves of
  $\SL_r$-type without special involution are given for instance by
  $$
  t_p(x)=\frac{1}{\sqrt{|p|}}
  \sum_{y\in \Oo_g/p}\chi(h(y))e\Bigl(\frac{xy}{|p|}\Bigr)
  $$
  where $h\in\Zz[X]$ is a ``generic'' squarefree polynomial of degree
  $\geq 2$. This follows from~\cite[Prop.\,3.7]{sop}, where the meaning
  of ``generic'' is also explained; here also, the sheaf $\mcF_p$ is
  lisse except at~$0$ and~$\infty$.
\end{example}

\end{document}